\newtheorem{theorem}{Theorem}[section]
\newtheorem{lemma}[theorem]{Lemma}
\newtheorem{definition}[theorem]{Definition}
\newtheorem{cor}[theorem]{Corollary}
\newtheorem{rem}[theorem]{Remark}
\newcommand{\Label}[1]{\label{#1}}
\newcommand\RR{\mathbb{R}}
\newcommand\QQ{\mathbb{Q}}
\newcommand\NN{\mathbb{N}}
\newcommand\CC{\mathbb{C}}
\newcommand{\cP}{\mathcal{P}}
\newcommand{\cF}{\mathcal{F}}
\newcommand{\cD}{\mathcal{D}}
\newcommand{\ZZ}{\mathbb{Z}}
\newcommand{\dd}{\partial}
\newcommand{\al}{\alpha}
\newcommand{\ga}{\gamma}
\newcommand{\OO}{\Omega}
\newcommand{\pr}{\prime}
\newcommand{\la}{\lambda}
\newcommand{\Zp}{\mathbb Z_p}
\newcommand{\Qp}{\mathbb Q_p}
\newcommand{\vph}{\varphi}
\newcommand{\Fx}{\cF_{x\to \xi}}
\newcommand{\w}[1]{\widetilde{#1}}
\newcommand{\wh}[1]{\widehat{#1}}
\newcommand{\col}{\colon}
\newcommand{\vep}{\varepsilon}
\newcommand{\DAN}{D^\alpha_N}
\date{}
\numberwithin{equation}{section}
\begin{document}


\title{Existence and uniqueness for $p$-adic counterpart of the porous medium equation}

\author{
	\textbf{Alexandra V. Antoniouk}\\
\footnotesize Institute of Mathematics of the National Academy of Sciences of Ukraine,\\
\footnotesize Tereshchenkivska 3, Kyiv, 01024 Ukraine,\\
\footnotesize American University Kyiv\\
\footnotesize Poshtova Sq 3, Kyiv, 04070 Ukraine
\footnotesize E-mail: antoniouk.a@gmail.com
\and
\textbf{Anatoly N. Kochubei}\\
\footnotesize Institute of Mathematics of the National Academy of Sciences of Ukraine,\\
\footnotesize Tereshchenkivska 3, Kyiv, 01024 Ukraine,\\
\footnotesize E-mail: kochubei@imath.kiev.ua 
\and
\textbf{Oleksii L. Nikitchenko}\\
\footnotesize The Ohio State University,\\
\footnotesize 231 W 18th Ave, Columbus, OH 43210, \\
\footnotesize Kyiv Academic University of the National Academy of Sciences of Ukraine\\
\footnotesize Vernadsky blvd., 36, 03142, Kyiv, Ukraine\\
\footnotesize E-mail: oleksiinikitchenko94@gmail.com
}

%
%
%
%
%
%
%

\maketitle

\bigskip
\begin{abstract}
We develop a theory of generalized solutions of the nonlinear evolution equations for complex-valued functions of a real positive time variable and $p$-adic spatial variable, which can be seen as non-Archimedean counterparts of the fractional porous medium equation.

In this case, we face the problem that a $p$-adic ball is simultaneously open and closed, thus having an empty boundary. To address this issue, we use the algebraic structure of the field of $p$-adic numbers and apply the Pontryagin duality theory to construct the appropriate fractional Sobolev type spaces.

We prove the existence and uniqueness results for the corresponding nonlinear equation and define an associated nonlinear semigroup.

\end{abstract}

{\bf Keywords:} fractional differential operator, $p$-adic analysis, porous medium equation, maximal monotone operators


{\bf MSC 2010}. Primary: 35S10; 47J35. Secondary: 11S80; 60J25; 76S05.

\newpage
\section{Introduction}

By Ostrowski’s theorem \cite[Thm. 1, Ch. I]{Koblitz:1984}, the field of rational numbers $\QQ$ permits only two essentially different topological completion: with respect to the metric topology induced by the usual absolute value of $x\in\QQ$ or ultrametric topology induced by non-Archimedean $p$-adic absolute value.

As a result, there exist two parallel mathematical worlds that evolve entirely independently. While classical differential equation theory, which is founded on the analysis of functions on the real (complex) number field, has already yielded a broad array of mathematical methods and concepts, the theory of such equations in $p$-adic function spaces is not as deeply developed.

Simultaneously, as proposed by Stephen Hawking and other authors, it is suggested that the intricate nature of spacetime at smaller scales could exhibit fractal characteristics, including the possibility of being $p$-adic \cite{GibHaw:1993, Not:1993, WheelerFord:1998, Volovich:2010}.  Therefore, the development of mathematical foundations for such objects is considered exceptionally important.

While the linear theory of such equations has already seen some development (see e.g. \cite{VVZ,Ko:2001,ZG} and the references therein), then nonlinear equations in $p$-adic functional spaces require further development of corresponding techniques. It appears that the first papers in this direction have only appeared relatively recently  \cite{Ko:2018, AKK}. At the same time, there is a lot of unsolved questions.

A non-Archimedean counterpart of the fractional porous medium equation, that is the equation
\begin{equation}\Label{1-1}\
D_tu+D^\al \big(\vph(u)\big)=0,\ \ u=u(t,x),\ \ t>0,\ x\in\Qp,
\end{equation}
was first studied in the paper \cite{Ko:2018}. Here $\Qp$ is the field of $p$-adic numbers, $D^\al$, $\al >0$ is Vladimirov's fractional differentiation operator
\begin{equation}\Label{DA}\
\big(D^\al u\big) (x)=\frac{1-p^\al}{1-p^{-\al-1}}\int\limits_{\Qp}\dfrac{u(x-y)-u(x)}{\vert y\vert_p^{\al+1}}\,dy
\end{equation}
acting on complex-valued functions on $\Qp$. Above in \eqref{1-1} $\vph$ is a strictly monotone increasing smooth continuous function, such that $\vph^\pr >0$, $\vph (\pm\infty)=\pm\infty$ and $\vph(0)=0$. We keep in mind the example $\vph(u)=\vert u\vert^{m-1}u$, $m>0$. Equation \eqref{1-1} is a nonlinear evolution equation for complex-valued functions of a real positive time variable and a $p$-adic spatial variable.

The additional complexity for such problem lies in the non-local nature of the corresponding pseudo-differential operator. Even ordinary non-local problems for real-variable functions are the subject of modern challenging research (see e.g. \cite{Vazquez:2007, Bonforte:2015} and references therein).

Developing an $L^1$-theory of Vladimirov’s $p$-adic fractional differentiation operator, in \cite{Ko:2018} it was proved the $m$-accretivity of the corresponding nonlinear operator and obtained the existence and uniqueness of a mild solution. That was motivated both by the existence of a $p$-adic model of a porous medium \cite{KOJ} and by the fact that the equation (\ref{1-1}) was the first natural example of a strongly nonlinear $p$-adic pseudo-differential equation.

\vspace{2mm}

In this paper, we study a stronger notion of solvability for the Cauchy problem on the $p$-adic ball, which serves as a model for a bounded open set in the $p$-adic context:
\begin{equation}\Label{1-2}
\left\{
\begin{array}{lc}
D_tu + \DAN \big(\vph(u)\big)=0,&t\in[0,T];\\
u(0)=u_0,&
\end{array}
\right.
\end{equation}
with the Vladimirov operator $\DAN$ on the $p$-adic ball
$$B_N=\{x\in \Qp\col \Vert x\Vert_p\leq p^N\}$$ defined, as in \cite{VVZ} by restricting the Vladimirov operator $D^\al$ to functions $u_N$ supported in the ball $B_N$ and considering the resulting function $D^\al u_N$ only on the ball $B_N$.

We prove the existence and uniqueness of generalized solutions of the problem (\ref{1-2}), de\-ve\-lop\-ing with the appropriate modifications, the method by Brezis \cite{Brezis:1971}; see also \cite{Barbu:2010}. A necessary prerequisite having independent interest is the investigation of a ``proper'' $p$-adic analog for Sobolev spaces, which is suitable for the study of boundary value problems (BVP) in the non-Archimedean case.
Sobolev spaces play an extraordinary role in the theory of partial differential equations since they give a natural and important tool for the investigation of different properties of BVP. For the $p$-adic analysis, different authors \cite{Taib,RVZG:2010, GorkaKostrRey:2014, GorkaKostr:2015, GorkaKost:2020} proposed suitable candidates for the role of Sobolev spaces based on the analogy or in the framework of pseudodifferential equations approach.

At the same time, there is a fundamental difference between the study of boundary value problems in the Archimedean and non-Archimedean situations: the finite ball in $p$-adic field has no boundary in the usual sense. That's why the study of $p$-adic BVP requires elaboration of a special and natural approach to the description of the spaces, which should play the role of the analog to the Sobolev spaces in usual PDE analysis.

For this purpose,  in Section \ref{sec3} we prove the equivalence of different analogs of $p$-adic Sobolev spaces and their connection with the domain of operator restricted to a $p$-adic ball. The investigation of this case is based on a purely non-Archimedean effect -- a $p$-adic ball centered at the origin is an additive group, so that we can use the harmonic analysis provided by the Pontryagin duality theory.

Building upon this developed technique, we can establish our main result Theorem \ref{main} on the existence and uniqueness of the weak solution to the nonlinear boundary value problem in $p$-adic ball and state the properties of the corresponding nonlinear semigroup.

\section{Preliminaries}

{\it 2.1. $p$-Adic numbers} \cite{Ko:2001,VVZ}. Let $p$ be a prime
number. The field of $p$-adic numbers is the completion $\mathbb Q_p$ of the field $\mathbb Q$
of rational numbers, with respect to the absolute value $|x|_p$
defined by setting $|0|_p=0$,
$$
|x|_p=p^{-\nu }\ \mbox{if }x=p^\nu \frac{m}n,
$$
where $\nu ,m,n\in \mathbb Z$, and $m,n$ are prime to $p$. $\Qp$ is a locally compact topological field.

Note that by Ostrowski's theorem there are no absolute values on $\mathbb Q$, which are not equivalent to the ``Euclidean'' one,
or one of $|\cdot |_p$.

The absolute value $|x|_p$, $x\in \mathbb Q_p$, has the following properties:
\begin{gather*}
|x|_p=0\ \mbox{if and only if }x=0;\\
|xy|_p=|x|_p\cdot |y|_p;\\
|x+y|_p\le \max (|x|_p,|y|_p).
\end{gather*}

The latter property called the ultra-metric inequality (or the non-Archi\-me\-dean property) implies the total disconnectedness of $\Qp$ in the topology
determined by the metric $|x-y|_p$, as well as many unusual geometric properties. Note also the following consequence of the ultra-metric inequality:
\begin{equation*}
|x+y|_p=\max (|x|_p,|y|_p)\quad \mbox{if }|x|_p\ne |y|_p.
\end{equation*}

The absolute value $|x|_p$ takes the discrete set of non-zero
values $p^N$, $N\in \mathbb Z$. If $|x|_p=p^N$, then $x$ admits a
(unique) canonical representation
\begin{equation}
\Label{2.1}\
x=p^{-N}\left( x_0+x_1p+x_2p^2+\cdots \right) ,
\end{equation}
where $x_0,x_1,x_2,\ldots \in \{ 0,1,\ldots ,p-1\}$, $x_0\ne 0$.
The series converges in the topology of $\mathbb Q_p$. For
example,
$$
-1=(p-1)+(p-1)p+(p-1)p^2+\cdots ,\quad |-1|_p=1.
$$

The {\it fractional part} of element $x\in\Qp$ in canonical representation \eqref{2.1} is given by:
\[
\{x\}_p=
\left\{\begin{array}{ll} 0,&\text{if}\quad N\leq 0 \ \ \ \text{or}\ \ \ x=0;\\
p^{-N}\big(x_0+x_1p+\ldots+x_{N-1}p^{N-1}\big),&\text{if} \quad N>0.
\end{array}
\right.
\]
The function $\chi(x)=\exp(2\pi i\{x\}_p)$ is an additive character of the field $\Qp$, i.e. the character of its additive group. Let us remark that $\chi(x) = 1$ if and only if $\vert x\vert_p\leq 1$.

We denote by $dx$ the Haar measure on the additive group $\Qp$ normalized on $\ZZ_p$ with the requirement: $\int_{\ZZ_p} dx = 1$, where $\Zp = \{x\in\Qp\col \vert x\vert_p\leq 1\}$ is the unit ball in $\Qp$.

\medskip
{\it 2.2. Fourier transformation and distributions on $\Qp$}.

Let us denote by $\cD (\Qp)$ the vector space of {\it test functions}, locally constant functions with compact supports.
 Recall that a function $\psi: \Qp\to \CC$ is {\it locally constant} if there exist such an integer $\ell \geq 0$ that for any $x\in\Qp$
\[\psi (x+y)=\psi (x),\quad \text{if}\quad \Vert y\Vert_p\leq p^{-\ell},\quad \text{($\ell$ is independent on $x$).}\]

The smallest number $\ell$ with this property is called {\it the exponent of constancy of the function $\psi$.} Typical examples
of locally constant functions are additive characters, and also cutoff functions like $\OO(\Vert x\Vert_p)$, where
\[\OO(t)=\left\{\begin{array}{ll}
1,& \ \text{if}\ \ 0 \leq t \leq 1;\\
0,& \ \text{if}\ \ t >1.
\end{array}\right.
\]
It is worth remarking that $\OO$ is continuous, which is an expression of the non-Archimedean properties of $\Qp$. Note also that $\cD(\Qp)$ is dense in $L_q(\Qp)$ for each $q\in [1,\infty)$.

Let us also introduce the subspace $D_N^\ell\subset \cD(\Qp)$ consisting of functions with supports in a ball $B_N$, $N\in\ZZ$
and with the exponents of local constancy less than $\ell\in\ZZ$. Then the topology in $\cD(\Qp)$ is defined as the double inductive limit topology, so that
\[\cD(\Qp)=\lim\limits_{\longrightarrow\atop{N\to\infty}}
\lim\limits_{\longrightarrow\atop{\ell\to\infty}}D_N^\ell.\]

If $V\subset \Qp$ is an open set, the space $\cD(V)$ of test functions on $V$ is defined as a subspace of $\cD(\Qp)$ consisting of functions with supports in $V$. For a ball $V=B_N$, we can identify $\cD (B_N)$ with the set of all locally constant functions on $B_N$.

The {\it Fourier transform} of a test function $\psi\in\cD (\Qp)$  is defined by the formula
\[
\big(\Fx\,\psi\big)(\xi)=\int\limits_{\Qp}\chi(\xi\, x)\psi(x)\,dx,\quad \xi \in \Qp.
\]

Remark that the additive group of $\Qp$ is self-dual, so that the Fourier transform of a complex-valued function $\psi\in \Qp$ is again a function on $\Qp$ and if $\Fx\psi\in L_1(\Qp)$ then we have the inversion formula
\[\psi(x)=\int\limits_{\Qp}\chi(-x\, \xi)\Fx\psi(\xi)\,d\xi.\]
Let us also remark that, in contrast to the Archimedean situation,  the Fourier transform
$\psi \to \Fx \psi$
is a linear and continuous automorphism of the space $\cD (\Qp)$ (cf. \cite[Lemma 4.8.2]{AKK:book}, see also \cite[Ch. II,§2.4.]{Gel}, \cite[III,(3.2)]{Taib}, \cite[VII.2.]{VVZ}, i.e.
$
\psi(x)=\cF^{\,-1}_{\xi\to x}\Big(\cF_{x\to\xi} \psi \Big).
$

The space $\cD^\pr(\Qp)$ of Bruhat-Schwartz distributions on $\Qp$ is defined as a strong conjugate space to $\cD(\Qp)$. By duality, the Fourier transform is extended to a linear (and therefore continuous) automorphism of $\cD^\pr(\Qp)$. For a detailed theory of convolutions and direct product of distributions on $\Qp$ closely connected with the theory of their Fourier transforms see \cite{AKK:book,Ko:2001,VVZ}

For the needs of this article, we also require some facts from the harmonic analysis on the $p$-adic ball $B_N$. At first, remark that $B_N$ is a compact subgroup of $\Qp$ and its annihilator $\{\xi\in\Qp\col \chi(\xi\,x)=1\  \text{for all}\  x\in B_N\}$ coincides with the ball $B_{-N}$. By the duality theorem (see, e.g. \cite[Theorem 27]{Morris:1977}, the dual group $\widehat{B}_N$ to $B_N$ is isomorphic to the discrete group $\Qp/B_{-N}$ consisting of cosets
\[p^m (r_0+r_1p+\cdots + r_{N-m-1}p^{N-m-1})+B_{-N}, \ \  r_j\in \{0,1, \ldots, p-1\}, \ m\in \ZZ, \  m<N.\]
This isomorphism means that any nontrivial continuous character of $B_N$, which has the form $\chi (\xi\cdot x)$, $x\in B_N$, where $\vert \xi \vert_p > p^{-N}$ and $\xi \in \Qp$, is considered as a representative of the class $\xi + B_{-N}$. Moreover the value $\vert \xi\vert_p$ is the same for any representative of the class.

Let us recall that the normalized Haar measure on $B_N$ is $d\mu=p^{-N}\, dx$. The normalization of the Haar measure on $\Qp/B_{-N}$ can be made in such a way that the equality
\[\int\limits_{\Qp} f(x)\, dx = \int\limits_{\Qp / B_{-N}}\Big (p^N \int\limits_{B_{-N}} f(x+h)\, dh\Big)\, d\hat{\mu} (x+B_{-N})
\]
is valid for any $f\in \cD (\Qp)$. (See, for example, \cite[Chapter VII, Proposition 10]{Bourbaki:2004} or \cite[Th. (28.54), p. 51(91)]{Hewitt-Ross:1979:II}). Above $\hat{\mu}(x+B_{-N})$ denotes the normalized Haar measure on $\Qp / B_{-N}$. With this normalization it is also true the Plancherel identity for the corresponding Fourier transform on $B_N$ given by the formula:
\begin{equation}\Label{FN}\
\wh{f}\equiv(\cF_N f)(\xi) = p^{-N}\int\limits_{B_N} \chi(x\, \xi)f(x)\, dx, \quad \xi\in (\Qp / B_{-N})\cup \{0\}.
\end{equation}
It follows from \eqref{FN} that $\cF_N f$ can be understood as a function on $\Qp / B_{-N}$.
Since $\cF\col \cD(\Qp)\to \cD (\Qp)$, the Fourier transform $\cF$ maps $\cD(B_N)$ onto the set of functions on the discrete set $\wh{B_N}$ with only a finite number of nonzero values. Thus the set $\cD(B_N)$ with natural locally convex topology may be considered as the set of test functions on $\wh{B_N}=\Qp / B_{-N}$. The conjugate space $\cD^\pr(\wh{B_N})$ consists of all functions on $\wh{B_N}$, see e.g. \cite{Helem:2006}. Therefore the Fourier transform is extended, via duality, to the mapping from $\cD^\pr(B_N)$ to $\cD^\pr(\wh{B_N})$ and the theory of distributions on locally compact group, in particular $B_N$, developed by Bruhat \cite{Bruhat:1961} as well applicable.

The corresponding Plancherel identity has the form (see \cite[(31.46)]{Hewitt-Ross:1979:II}):
\begin{equation}\Label{31-46}\
\dfrac{1}{p^N}\int\limits_{B_N}\vert h\vert^2\,dx=\Vert \cF_N h\Vert^2_{L_2(\widehat B_N)},
\end{equation}
for $h\in L_2(B_N)$.

\medskip
{\it 2.3 Spectrum of the Vladimirov operator on the $p$-adic ball}. In \cite{Kochubei:2018:Ball} it was proven that Vladimirov operator $D_N^\al$ has the representation:
\begin{equation}\Label{1-3}
\left( \DAN u\right) (x)=\lambda_0 u (x)+\frac{1-p^{\alpha}}{1-p^{-\alpha -1}}\int\limits_{B_N}|y|_p^{-\alpha -1}[u (x-y)-u (x)]\,dy
\end{equation}
on functions $u$ from the space $\mathcal D(B_N)$ of locally constant functions with compact support in $B_N$.

Operator $D_N^\al$ is a positive definite self-adjoint operator on $L_2(B_N)$ and
\begin{equation}\Label{lla}\
\lambda_0 =\frac{p-1}{p^{\alpha +1} -1}p^{\alpha (1-N)}
\end{equation}
is its smallest eigenvalue. It is also well known that operator $\DAN$ on $B_N$
has a complete orthonormal system of eigenfunctions, consisting of so called Vladimirov functions (see e.g. \cite[Ch.3, \S 3.3.2]{Ko:2001}, \cite[Section 10.4]{VVZ}):
 \begin{equation*}
	\varPsi_0 (x) =
	\begin{cases}
		p^{-N/2}, &\text{if $x \in B_N$}\\
		0, &\text{if $x \notin B_N$},
	\end{cases}
\end{equation*}
corresponding to the eigenvalue $\la_0$
of multiplicity 1, and
$${\varPsi_{1-N,j,0}^1(x) = p^{\frac{-N}{2}}\Omega(p^{-N} \Vert x \Vert_p) \chi(jp^{N-1}x)} \, ,$$
where ${j \in \{1,\ldots,p-1\}} \,, j \notin \Zp^0,$ with eigenvalue
$\lambda_1=p^{\alpha(1-N)}\ \ (\text{multiplicity} \ p-1),$

$${\varPsi_{\mu-N,j,\varepsilon}^{l}(x) = p^{\frac{\mu-N-l}{2}}\sqrt{\frac{p}{p-1}}\delta(\Vert x \Vert_p - p^{l+N-\mu}) \chi(\varepsilon p^{l-2(\mu-N)}x^2+jp^{l+N-\mu-1}x) \, ,}$$
where $2 \leq l \leq \mu, j \in \{1,\ldots,p-1\}, j \notin \Zp^0, \varepsilon = \varepsilon_0 + \varepsilon_1 \beta + ... + \varepsilon_{l-2} \beta^{l-2}$ and
$$\varPsi_{\mu-N,j,0}^{1}(x)=p^{\frac{\mu-N-1}{2}}\Omega(p^{\mu-N-1}\Vert x \Vert_p)\chi(jp^{N-\mu}x), $$
 where $j \in \{1,\ldots,p-1\},$ $ j \notin \Zp^0,$ with
the eigenvalue $\lambda_{\mu}=p^{\alpha(\mu-N)}$ of the total multiplicity
 $p^{\mu-1}(p-1), \mu = 2,3,...$.
Here {$\Zp^0 = \{ x \in B_N \colon  \Vert x \Vert_p < 1\}$, and}
\begin{equation*}
 	\delta(t) =
 	\begin{cases}
 		1, &\text{if $t=0$}\\
 		0, &\text{if $t\neq 0$}.
 	\end{cases}
 \end{equation*}
For other systems of the orthogonal eigenvectors see e.g. \cite{Koz:2007, BGPW:2014, BZ:2019}.
\section{Sobolev spaces over $p$-adic ball} \Label{sec3}

Consider the additive group of $B_N$. The dual group $\widehat{B}_N$ is isomorphic to the discrete group $\Qp /B_{-N}$ consisting of the cosets
\begin{equation}\Label{K1}\
\xi = p^m (r_0+r_1p+\ldots+r_{N-m-1}p^{N-m-1}) + B_{-N},
\end{equation}
where $r_j \in \{0,1,\ldots,p-1\}$, $m\in\ZZ$, $m<N.$ For $\xi\in \Qp /B_{-N}$ we set $\Vert \xi\Vert = p^{-m}$.

\begin{definition}\Label{defSob}\ The Sobolev space $H^\al(B_N)$ consists of such functions $f\in L_2(B_N)$ that
\[\Vert f\Vert^2_{H^\al(B_N)}=\int\limits_{\widehat B_N}\vert \widehat f(\xi)\vert^2\big(1+\vert \xi\vert^2_p\big)^\al\,d\xi=\sum\limits_{\xi\in\widehat{B}_N}\vert \widehat{f}(\xi)\vert^2 (1+\Vert \xi\Vert^2)^\al < \infty\]
where $\widehat{f}=\cF_N f$ is the Fourier transform in the ball $B_N$ \eqref{FN}.
\end{definition}

\begin{definition}\Label{defAGS}\ Let $u\in L_2(B_N)$ and $s\in (0,1)$. We say that function $u$ belongs to the Aronszain-Gagliardo-Slobodecki space $H_{AGS}^s(B_N)$ if the following norm is finite:
\[\Vert u\Vert_{H_{AGS}^s}=\Vert u\Vert_{L_2(B_N)}+[u]_s,\]
where
\begin{equation}\Label{Us}\
[u]_s^2=\int\limits_{B_N}\int\limits_{B_N}\frac{\vert u(x)-u(y)\vert^2}{\vert x-y\vert_p^{2s+1}}\,dx\,dy.
\end{equation}
\end{definition}

For the case $\RR^n$ the corresponding space was introduced by Aronszain, Gagliardo and Slobodecki independently in \cite{Aron:1955, Gagl:1958, Slob:1958}. The properties of these spaces for the locally compact abelian groups were recently investigated in \cite{GorkaKost:2020}. Below we prove some results about these spaces, which we apply further.
\begin{theorem} \Label{prop-3-3}\ If $0<s<1$ then the spaces $H^s_{AGS}(B_N)$ and $H^s(B_N)$ are isomorphic, i.e. there are some constants $C_1$ and $C_2$ such that
\begin{equation}\Label{CC1}\
C_2\Vert u\Vert_{H^s(B_N)}\leq \Vert u\Vert_{H^s_{AGS}(B_N)} \leq C_1\Vert u\Vert_{H^s(B_N)}.
\end{equation}
\end{theorem}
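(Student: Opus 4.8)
The plan is to compute both norms on the Fourier side using the explicit structure of the dual group $\widehat{B}_N \cong \Qp/B_{-N}$, and reduce the equivalence \eqref{CC1} to showing that the Gagliardo seminorm $[u]_s^2$ equals, up to constants independent of $u$, the multiplier sum $\sum_{\xi} |\widehat{u}(\xi)|^2 \|\xi\|^{2s}$. Since $\|u\|_{L_2(B_N)}^2 = \sum_\xi |\widehat u(\xi)|^2$ by the Plancherel identity \eqref{31-46}, and $(1+\|\xi\|^2)^s$ is comparable to $1 + \|\xi\|^{2s}$ (the discrete values $\|\xi\| = p^{-m}$ make this elementary: split into $\|\xi\|\le 1$ and $\|\xi\|>1$), proving \eqref{CC1} amounts to the two-sided bound
\[
c_1 \sum_{\xi\in\widehat B_N} |\widehat u(\xi)|^2 \|\xi\|^{2s} \ \le\ [u]_s^2 \ \le\ c_2 \sum_{\xi\in\widehat B_N} |\widehat u(\xi)|^2 \|\xi\|^{2s}.
\]

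First I would rewrite $[u]_s^2$ as $\int_{B_N}\left(\int_{B_N}\frac{|u(x)-u(x-h)|^2}{\|h\|_p^{2s+1}}\,dh\right)dx$ after the substitution $y = x-h$ (legitimate since $B_N$ is a group under addition and the measure is translation-invariant; note $x - h$ ranges over $B_N$ as $h$ does). Expanding $|u(x)-u(x-h)|^2 = 2|u(x)|^2 - 2\,\mathrm{Re}\,\overline{u(x)}u(x-h)$ and integrating in $x$ first, the cross term becomes a convolution; taking the Fourier transform $\cF_N$ and invoking Plancherel turns the whole expression into
\[
[u]_s^2 = \sum_{\xi\in\widehat B_N} |\widehat u(\xi)|^2 \, A_N(\xi), \qquad A_N(\xi) = \int_{B_N} \frac{1 - \mathrm{Re}\,\chi(\xi h)}{\|h\|_p^{2s+1}}\,dh,
\]
where I have used $\frac{1}{p^N}\int_{B_N}\chi(\xi h)(\cdots)$ bookkeeping consistent with \eqref{FN}–\eqref{31-46}; the constants $p^{\pm N}$ are harmless and I would track them carefully but not belabor them here. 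The problem is thus reduced to the pointwise estimate $A_N(\xi) \asymp \|\xi\|^{2s}$ uniformly over $\xi \in \widehat B_N$ (with implied constants depending only on $p, s, N$, and in fact $N$ only through overall normalization).

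The main work — and the step I expect to be the main obstacle — is evaluating $A_N(\xi)$. Writing $\|\xi\| = p^{-m}$ with $m < N$, I would decompose $B_N = \bigsqcup_{k \le N} S_k$ into spheres $S_k = \{|h|_p = p^k\}$ (plus the point $0$, measure zero), on which $\|h\|_p^{2s+1} = p^{k(2s+1)}$ is constant. On each sphere, $\int_{S_k} \chi(\xi h)\,dh$ is a standard $p$-adic Gauss-type integral: it equals the full measure $p^k(1-p^{-1})$ when $|\xi|_p p^k \le 1$, equals $-p^{k-1}$ when $|\xi|_p p^k = p$, and equals $0$ when $|\xi|_p p^k \ge p^2$. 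With $|\xi|_p = p^{-m}$ this means the character integral over $S_k$ vanishes for $k \ge m+2$, contributes a small negative term at $k = m+1$, and contributes the full measure for $k \le m$. Hence only finitely many terms near $k \approx m$ survive in $1 - \mathrm{Re}\,\chi(\xi h)$ integrated over each sphere, and summing the geometric-type series $\sum_{k \le m} p^{-k(2s+1)} \cdot p^k (1-p^{-1})$ together with the boundary terms at $k = m+1$ produces exactly a constant multiple of $p^{2sm} = \|\xi\|^{-2s}$... wait — I must be careful with signs of exponents. Redoing: the dominant contribution is $\sum_{k \le m+1} (\text{char integral on } S_k)\cdot p^{-k(2s+1)}$, and since $\sum_{k\le m+1} p^{-2sk}$ converges (as $s>0$) with sum $\asymp p^{-2s m}$... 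I will instead organize it as: the terms with $k$ large and negative ($h$ close to $0$) give the convergent tail $\sum_{k} p^{k} p^{-k(2s+1)} = \sum_k p^{-2sk}$ which diverges at $k\to-\infty$ unless cancelled — and indeed $1 - \mathrm{Re}\,\chi(\xi h) \to 0$ as $|h|_p\to 0$, killing that divergence; quantitatively $1 - \mathrm{Re}\,\chi(\xi h) \asymp |\xi h|_p^2$ on the relevant range, contributing $\sum_{k: p^{k-m}\le 1}$ of order $|\xi|_p^2 p^{2k} p^{-k(2s+1)} = p^{-2m} p^{k(1-2s)}$, summed over $k \le m$, which for $s<1$ gives geometric convergence at the top $k=m$ with value $\asymp p^{-2m}\cdot p^{m(1-2s)} = p^{-m(1+2s)}$. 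Hmm — this is getting delicate, so in the actual proof I would just carefully sum the three explicit regimes of the sphere integrals above and read off that $A_N(\xi)$ is bounded above and below by constant multiples of $\|\xi\|^{2s} = p^{-2sm}$, uniformly in $m < N$ (for fixed $N$). The requirement $0 < s < 1$ enters precisely to make both the near-zero sum ($s<1$ gives $\sum p^{k(1-2s)}<\infty$ near $k\to -\infty$? no — near $k\to+\infty$, i.e. $k$ up to $m$) and the structure work; once $A_N(\xi)\asymp \|\xi\|^{2s}$ is established, combining with Plancherel and the comparison $(1+\|\xi\|^2)^s \asymp 1 + \|\xi\|^{2s} \asymp \max(1,\|\xi\|^{2s})$ yields \eqref{CC1} with explicit $C_1, C_2$, completing the proof.
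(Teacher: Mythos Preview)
Your Plancherel reduction of $[u]_s^2$ to a multiplier sum $\sum_{\xi}|\widehat u(\xi)|^2 A_N(\xi)$ is exactly what the paper does (with $A_s(\xi)=\int_{B_N}|\chi(z\xi)-1|^2\,|z|_p^{-2s-1}\,dz=2A_N(\xi)$), and the remaining task---proving $A_N(\xi)\asymp\|\xi\|^{2s}$---is the same. Where you diverge is in the method for that kernel estimate. The paper does not decompose into spheres; instead it substitutes $z=\eta\,\xi^{-1}$ to factor out $\|\xi\|^{2s}$ in one stroke, obtaining
\[
A_s(\xi)=\|\xi\|^{2s}\int_{|\eta|_p\le p^N\|\xi\|}\frac{|\chi(\eta)-1|^2}{|\eta|_p^{2s+1}}\,d\eta,
\]
and then bounds the remaining integral below by restricting to $\{|\eta|_p\le p\}$ (always contained in the domain since $\|\xi\|\ge p^{1-N}$) and above by a reference to G\'orka--Kostrzewa. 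This is slicker and sidesteps all the bookkeeping you found delicate.

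Your sphere-decomposition approach is viable, but your execution contains a genuine non-Archimedean slip: you write ``$1-\mathrm{Re}\,\chi(\xi h)\asymp|\xi h|_p^2$ on the relevant range,'' which is Archimedean intuition. In fact $\chi(\xi h)=1$ \emph{exactly} whenever $|\xi h|_p\le 1$, so $1-\mathrm{Re}\,\chi(\xi h)=0$ on all spheres $S_k$ with $k\le m$ (where $\|\xi\|=p^{-m}$). This is what kills the would-be divergence at small $h$, not any quadratic vanishing. Once you use this, the computation is short: only $k=m+1$ (character integral $-p^m$) and $m+2\le k\le N$ (character integral $0$) contribute, yielding
\[
A_N(\xi)=\tfrac12\Big(p^{-2s(m+1)}+(1-p^{-1})\sum_{k=m+2}^{N}p^{-2sk}\Big),
\]
from which $p^{-2s}\|\xi\|^{2s}\le 2A_N(\xi)\le C(p,s)\,\|\xi\|^{2s}$ follows immediately. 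So your route works and even gives the kernel explicitly; the paper's change of variables just gets there with less arithmetic.
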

\begin{proof} In fact,
\[[u]^2_s=\sum\limits_{\xi\in \widehat{B}_N}\vert \wh{u}(\xi)\vert^2 A_s(\xi),\]
where
\begin{equation}\Label{2K}\
A_s(\xi)=\int\limits_{B_N}\dfrac{\vert \chi(z\,\xi)-1\vert^2}{\vert z\vert_p^{2s+1}}\, dz.
\end{equation}
Indeed, by substituting $z= x-y$ into \eqref{Us} and using Fubini theorem and then Plancherel equality we have:
\begin{align*}
[u]^2_s&=\int\limits_{B_N}\int\limits_{B_N}\frac{\vert u(z+y)-u(y)\vert^2}{\vert z\vert_p^{2s+1}}\,dy\,dz=\int\limits_{B_N}\Big\Vert \dfrac{u(z+\cdot)-u(\cdot)}{\vert z\vert^{s+1/2}}\Big\Vert^2_{L_{2}(B_N)} dz=\\
&=\int\limits_{B_N}\Big\Vert \cF_N\Big(\dfrac{u(z+\cdot)-u(\cdot)}{\vert z\vert^{s+1/2}}\Big)\Big\Vert^2_{L_{2}(\wh{B}_N)} dz=\int\limits_{B_N}\Big\Vert \dfrac{\chi(z\,\xi) \wh{u}(\xi)-\wh{u}(\xi)}{\vert z\vert^{s+1/2}}\Big\Vert^2_{L_{2}(\wh{B}_N)} dz=\\
&=\int\limits_{B_N}\sum\limits_{\xi\in\wh{B}_N} \dfrac{\vert\chi(z\, \xi) -1\vert^2}{\vert z\vert^{2s+1}}\vert \wh{u}(\xi)\vert^2 dz=\sum\limits_{\xi\in \widehat{B}_N}\vert \wh{u}(\xi)\vert^2 A_s(\xi).
\end{align*}

Similar to the inequality (13) from \cite{GorkaKost:2020}, we may prove that there exists such a constant $C_1$ that
\begin{equation}\Label{3-4}\
A_s(\xi)\leq C_1\Vert \xi\Vert^{2s}, \quad \forall \xi \in \wh{B}_N.
\end{equation}
This implies that
\[[u]_s\leq C_1\sum\limits_{\xi\in\wh{B}_N}\vert \wh{u}(\xi)\vert^2\Vert \xi\Vert^{2s}\leq C_1 \Vert u\Vert^2_{H^s(B_N)}.\]
To prove left part of the inequality \eqref{CC1} let us make the change of variables in \eqref{2K}, where $\xi \in \wh{B}_N$ is identified with its principal part in \eqref{K1}, $\vert \xi\vert_p\geq p^{-N+1}$. We write $z=\eta\,\xi^{-1}$, so that
\begin{align*}
A_s(\xi)&=\int\limits_{\vert \eta\vert\leq p^N\Vert\xi\Vert}\dfrac{\vert\chi(\eta)-1\vert^2}{\vert \eta\vert_p^{2s+1}\Vert\xi\Vert^{-2s-1}}\Vert \xi\Vert^{-1}\,d\eta=\\
&=\Vert \xi\Vert^{2s}\int\limits_{\vert \eta \vert_p\leq p^N\Vert \xi\Vert}\dfrac{\vert \chi(\eta)-1\vert^2}{\vert \eta\vert_p^{2s+1}}\, d\eta.
\end{align*}
Since $\Vert \xi\Vert \geq p^{-N+1}$, we have
\[\big\{\vert \eta\vert_p \leq p^N\Vert \xi\Vert \big\}\supset\big\{\vert \eta \vert_p\leq p\big\},\]
therefore
\[A_s(\xi)\geq C_2 \Vert \xi\Vert^{2s},\]
where
\[C_2 = \int\limits_{\vert \eta\vert_p\leq p}\dfrac{\vert\chi(\eta)-1\vert^2}{\vert \eta\vert_p^{2s+1}}\,d\eta.\]
\end{proof}

\section{The existence of a weak solution}

As we mentioned in Section 2.3 operator $D_N^\al$ \eqref{1-3} is a self-adjoint operator on $L_2(B_N)$ with discrete spectrum. Let us denote by $\{\mathfrak{a}_k\}, k = 1, 2, \ldots $, $\mathfrak{a}_k >0$ its eigenvalues written in increasing order and repeating according to their multiplicity, and by $\{\psi_k\}_{k\geq 1}$ corresponding set of eigenfunctions, normalized in $L_2(B_N)$. They form an orthogonal basis in $L_2(B_N)$ (see e.g. \cite{Koz:2007, BGPW:2014, BZ:2019}).
\begin{definition}\Label{def1-2}\
Let us define a Hilbert space $H_1$ as the space of all functions $u\in L_2(B_N)$ such that
\begin{equation}\Label{H1}\
u=\sum\limits_{k=1}^\infty c_k \psi_k \in L_2(B_N), \quad \Vert u \Vert^2_{H_1} = \sum\limits_{k=1}^\infty \mathfrak{a}_k \vert c_k\vert^2 <\infty.\end{equation}
Thus the dual space $H_{-1}$ to $H_1$ is defined as the completion of the finite sums of the form $$f=\sum\limits_{k=1}^Nc'_k\psi_k$$ with respect to the dual norm
	$$||f||^2_{H_{-1}}=\sum\limits_{k=1}^{\infty} \mathfrak{a}_k^{-1}|c'_k|^2.$$
\end{definition}

The operator $D_N^{\alpha}$ in $L_2(B_N)$, defined on the dense domain $\mathcal D(B_N)$ of locally constant functions with compact support in the $p$-adic ball $B_N$,  is semi-bounded from below

$$( D_N^{\alpha} u,u)_{L_2(B_N)} \geq \la_0\Vert u \Vert_{L_2(B_N)}$$
for any $u \in \mathcal D(B_N)$. Therefore the Hilbert space $H_1$ is isomorphic to the closure of  $\mathcal D(B_N)$ with respect to the scalar product
\begin{equation}\Label{H1-1}\
(u,v)_{1} = (u,v)_{L_2(B_N)}+(D_N^{\alpha}u, v)_{L_2(B_N)}.
\end{equation}

Let operator $\cP_{N,\al}$ acting on $\cD(B_N)$ given by
\begin{equation}\Label{PNal}\
\cP_{N,\al}u(x)=D_N^\al u(x)-\la_0u(x)=\dfrac{1-p^\al}{1-p^{-\al-1}}\int\limits_{B_N}\dfrac{u(x-y)-u(x)}{\vert y\vert^{\al+1}_p}\,dy,
\end{equation}
with $\la_0$ from \eqref{lla}.
\begin{theorem}\Label{Th-hitchhiker}\
\begin{equation}\Label{PN}\
\cF_N \big(\cP_{N,\al}u\big)(\xi)=\frac{1}{p^N}\vert\xi\vert^\al_p\cF_Nu(\xi),\quad  \xi\in\widehat B_N, \al\in(0,1).
\end{equation}
\end{theorem}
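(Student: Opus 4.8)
The plan is to show that, on the Fourier side, $\cP_{N,\al}$ is multiplication by a radial symbol, and then to evaluate that symbol. Fix $u\in\cD(B_N)$. Since $u$ is locally constant, with exponent of constancy $\ell$ say, the numerator $u(x-y)-u(x)$ in \eqref{PNal} vanishes for $\Vert y\Vert_p\le p^{-\ell}$, so $\cP_{N,\al}u$ is an integral of a bounded function over the compact annulus $\{p^{-\ell}<\vert y\vert_p\le p^N\}$; in particular $\cP_{N,\al}u\in\cD(B_N)$ and $\cF_N(\cP_{N,\al}u)$ is a well-defined, finitely supported function on $\widehat B_N$. Inserting the definition \eqref{FN} of $\cF_N$ into $\cF_N(\cP_{N,\al}u)(\xi)$, interchanging the two integrations (legitimate since the integrand is bounded on a set of finite measure), and, in the term carrying $u(x-y)$, substituting $x\mapsto x+y$ — a bijection of $B_N$ onto itself with Jacobian $1$, because $B_N$ is a compact additive group — which produces a factor $\chi(y\xi)$, one is left with
\[
\cF_N\big(\cP_{N,\al}u\big)(\xi)=\si(\xi)\,\cF_N u(\xi),\qquad
\si(\xi)=\frac{1-p^\al}{1-p^{-\al-1}}\int\limits_{B_N}\frac{\chi(y\xi)-1}{\vert y\vert_p^{\al+1}}\,dy .
\]
Here $\si(\xi)$ is unambiguous ($\chi(y\xi)$ does not depend on the representative of the coset $\xi\in\widehat B_N$ since $B_{-N}$ annihilates $B_N$) and, since the integral of an additive character over a ball depends only on the absolute value of the frequency, $\si(\xi)$ depends on $\xi$ only through $\vert\xi\vert_p$; moreover $\si(0)=0$.

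It then remains to compute the scalar integral defining $\si(\xi)$ for $\xi\ne0$. Decompose $B_N=\bigsqcup_{k\le N}S_k$ with $S_k=\{y:\vert y\vert_p=p^k\}$, on which $\vert y\vert_p^{-\al-1}$ is the constant $p^{-k(\al+1)}$, and use the elementary value of the character integral over a $p$-adic sphere: $\int_{S_k}\chi(y\xi)\,dy$ equals $(p-1)p^{k-1}$ if $p^k\vert\xi\vert_p\le1$, equals $-p^{k-1}$ if $p^k\vert\xi\vert_p=p$, and vanishes if $p^k\vert\xi\vert_p\ge p^2$. Hence in $\si(\xi)$ only the finitely many spheres with $p\le p^k\vert\xi\vert_p$ and $k\le N$ contribute, what is left is a finite geometric progression in $k$, and summing it and simplifying against the normalizing constant $\tfrac{1-p^\al}{1-p^{-\al-1}}$ produces exactly the multiplier asserted in \eqref{PN}. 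The case $\vert\xi\vert_p=0$ (the trivial character, $u$ constant on $B_N$) is immediate, both sides vanishing.

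I expect the only genuine work to be this last computation. The non-trivial point is that, because $B_N$ is a ball, the sphere index $k$ is bounded above by $N$, so the summation runs over the finite range from the value of $k$ with $p^k\vert\xi\vert_p=p$ up to $k=N$; the two ``exceptional'' spheres — the top one $k=N$ and the one on which $p^k\vert\xi\vert_p=p$ — must be separated from the generic geometric-series terms, and the shape of the answer depends on whether $\vert\xi\vert_p$ equals the least value $p^{1-N}$ attained on $\widehat B_N$ or is larger. Keeping track of these finitely many boundary contributions and of the normalizing factors in \eqref{PNal} and \eqref{lla} is the whole of the difficulty; everything else is formal. (Alternatively one can avoid the explicit integral by recalling from Section 2.3 that $\cP_{N,\al}=D_N^\al-\la_0 I$ is a convolution operator on the group $B_N$, diagonalized by the Vladimirov system, and read its symbol off the eigenvalues $\la_\mu$ together with the identification of the corresponding frequencies; the bookkeeping is then shifted to matching $\la_\mu$ with the appropriate power of $\vert\xi\vert_p$.)
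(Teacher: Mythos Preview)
Your reduction to the scalar multiplier $\si(\xi)$ is exactly what the paper does: same change of variables, same appeal to the group structure of $B_N$, same Fubini argument. Where you diverge is in the evaluation of
\[
\si(\xi)=\frac{1-p^\al}{1-p^{-\al-1}}\int_{B_N}\frac{\chi(y\xi)-1}{\vert y\vert_p^{\al+1}}\,dy.
\]
The paper does \emph{not} decompose $B_N$ into spheres and sum a geometric series. Instead it identifies the integrand as (the test-function pairing with) a Riesz kernel $f_\ga^N(x)=\vert x\vert_p^{\ga-1}/\Gamma_p(\ga)$ on $B_N$, computes its Fourier transform $\cF_N f_\ga^N$ for $\ga\in(0,1)$ using a tabulated identity $\int_{B_r}\vert x\vert_p^{\ga-1}\chi(x)\,dx=\Gamma_p(\ga)$ valid for $r\ge1$, interprets the resulting relation
\[
\frac{1}{\Gamma_p(\ga)}\int_{B_N}\vert x\vert_p^{\ga-1}\bigl(\chi(-x\xi)-1\bigr)\,dx=\vert\xi\vert_p^{-\ga}
\]
in the sense of distributions on $\widehat B_N$, and then analytically continues in $\ga$ to $\ga=-\al$. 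So the paper's route is distribution-theoretic and relies on analytic continuation of homogeneous distributions together with an external table of $p$-adic integrals; yours is a direct, entirely elementary sphere-by-sphere calculation.

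What each buys: your approach is self-contained and makes the $N$-dependence of the bounded-domain symbol completely explicit, at the cost of some bookkeeping at the two extreme spheres you flagged. The paper's approach is slicker once the machinery is in place (no case analysis, one line of analytic continuation) but imports nontrivial facts about regularization of $\vert x\vert_p^{\ga-1}$ and the $p$-adic Gamma function. Your parenthetical alternative via the Vladimirov eigensystem is a third route, closer in spirit to the paper's use of spectral data in the surrounding sections but not the argument actually given here.
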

\begin{proof}
Let us find $\cF_N\big(\cP_{N,\al}u\big)$.
\begin{align}\nonumber
\cF_N\big(\cP_{N,\al}u\big)&=\dfrac{1}{p^N}\int\limits_{B_N}\chi(x\xi)\cP_{N,\al}u(x)\,dx=\\
\nonumber
&=\dfrac{a_p}{p^N}\int\limits_{B_N}\chi(x\xi)\int\limits_{B_N}\dfrac{u(x-y)-u(x)}{\vert y\vert_p^{\al+1} }\,dy\,dx=\\
\nonumber
&=\dfrac{a_p}{p^N}\int\limits_{B_N}\dfrac{1}{\vert y\vert_p^{\al+1} }
\int\limits_{B_N}\chi(x\xi)[u(x-y)-u(x)]\,dx\,dy=\\
\nonumber
&=a_p\int\limits_{B_N}\dfrac{\chi(y\,\xi)-1}{\vert y\vert_p^{\al+1} }
\,dy \,\cF_N u(\xi),
\end{align}
where $\xi\in\Qp/B_{-N}$ and $a_p=\dfrac{1-p^\al}{1-p^{-\al-1}}$.
Let us prove that

\begin{equation}\Label{norm-alpha}\
 \frac{1-p^\al}{1-p^{-\al-1}}\int\limits_{B_N}\dfrac{\chi(y\,\xi)-1}{\vert y\vert_p^{\al+1} }
\,dy =\vert \xi\vert^\al_p, \quad \xi\in \widehat{B}_N, \al\in(0,1).
\end{equation}

To prove \eqref{norm-alpha} let us consider the Riesz kernel
\[f_\ga^N(x)=\dfrac{\vert x\vert_p^{\ga-1}}{\Gamma_p(\ga)}, \quad x\in B_N, \ga \in (0,1),\]
where $\Gamma_p(\ga)=\dfrac{1-p^{\ga-1}}{1-p^{-\ga}}$ denotes $p$-adic Gamma function (see e.g. \cite[Ch. II, \S 2.6 (14)]{Gel}). Its Fourier transform equals to:
\begin{align}\nonumber\
\cF_Nf_\ga^N(\xi)&=\dfrac{1}{p^N \Gamma_p(\ga)}\int\limits_{B_N}\chi(x\xi)\vert x\vert_p^{\ga-1}\, dx=\\
\nonumber
&=\dfrac{1}{p^N \Gamma_p(\ga)}\int\limits_{\vert y\vert_p\leq p^N\vert \xi\vert_p}\chi(y)\vert y\vert_p^{\ga-1}\vert \xi\vert_p^{-\ga}\, dy=\\
\nonumber\
&=\dfrac{\vert \xi\vert_p^{-\ga}}{p^N \Gamma_p(\ga)}\int\limits_{\vert y\vert_p\leq p^N\vert \xi\vert_p}\chi(y)\vert y\vert_p^{\ga-1}\, dy,\quad \ga\in(0,1),
\end{align}
$\xi\in \Qp / B_{-N}\simeq\widehat{B}_N$.
Above we used the change of variables $x=y\xi^{-1}$, therefore $dx=\vert\xi\vert_p^{-1}dy.$ Remark that due  to (12.40) in \cite{Vladimirov:tables} for any $r\geq 1$:
\[\int_{B_r}\vert x\vert_p^{\ga-1}\chi(x)\,dx=\Gamma_p(\ga).\]
Since $\vert \xi\vert_p=p^m$ with $m<N$, see \eqref{K1}, we have that $p^N\vert \xi\vert_p>1$.

Therefore considering $f_\ga^N(x)$ as a distribution from $\cD^\pr(B_N)$ we may identify the Fourier transform $\cF_N$ of $f_\ga^N$ with
\[\w f_\ga^N(\xi)=p^{-N}\vert \xi\vert_p^{-\ga}, \quad\xi \in \widehat{B}_N.\]

Now for any $\vph\in \cD(\widehat B_N)$ we denote $\psi=\cF^{-1}_N\vph$ and applying the regularization theory for homogeneous distributions \cite{Gel-Shilov} (see also \cite{VVZ, Ko:2001}) we find that for $\ga \in (0,1)$:
\begin{align*}
\langle\vert \xi\vert^{-\ga}_p,\vph(\xi)\rangle_{L_2(\wh{B}_N)}&= p^N\,\langle \w f_{\ga}^N,\vph(\xi)\rangle_{L_2(\wh{B}_N)}= p^N\,\langle \cF_N f_{\ga}^N,\cF_N\psi\rangle_{L_2(\wh{B}_N)}=\\
&=\frac{ p^{N}}{\Gamma_p(\ga)}\, \langle \vert x\vert_p^{\ga-1},\psi (x)-\psi(0)\rangle_{L_2(B_N)}=\\
&=\frac{ p^{N}}{\Gamma_p(\ga)}\, \int\limits_{B_N}\vert x\vert_p^{\ga-1}\int\limits_{\widehat B_N}\big(\chi(x\xi)-1\big)\vph(\xi)\,d\xi\,dx=\\
&=\int\limits_{\widehat B_N}\vph(\xi)\int\limits_{B_N}\frac{ p^{N}}{\Gamma_p(\ga)}\vert x\vert_p^{\ga-1}\big(\chi(x\xi)-1\big)\, dx\,d\xi.
\end{align*}
Therefore in terms of distributions we have for $\ga \in (0,1)$, $\xi \in \widehat B_N$:
\begin{equation}\Label{4-7}\
\dfrac{1}{\Gamma_p(\ga)}\int\limits_{B_N}\vert x\vert_p^{\ga-1}\big(\chi(-x\xi)-1\big)\, dx =\vert \xi\vert_p^{-\ga}.
\end{equation}
The distribution $f_\ga^N(x)=\dfrac{\vert x\vert_p^{\ga-1}}{\Gamma_p(\ga)}$ is holomorphic on $\ga$ everywhere except poles $1+\al_k$, $\al_k=\dfrac{2k\pi}{\ln p}i$ (e.g. \cite[Ch.I, \S 8]{VVZ}). Thus both sides of the equality \eqref{4-7} are well-defined for $\ga\in (-1,0)$ and also for $\ga = -\al$. Finally applying \eqref{norm-alpha} we have the statement.
\end{proof}
\begin{theorem}\Label{isom}\ Let $\al\in(0,1)$. Then
\begin{equation}\Label{eq-prop-3-4}\
\Vert u\Vert^2_{H^\al(B_N)}\asymp \Vert u\Vert^2_{H^\al_{AGS}}\asymp \Vert u\Vert ^2_{H_1}.
\end{equation}
\end{theorem}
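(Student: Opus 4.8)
I would split the statement into its two equivalences of squared norms and prove each separately.

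The first equivalence, $\|u\|^2_{H^\alpha(B_N)}\asymp\|u\|^2_{H^\alpha_{AGS}}$, is immediate from Theorem~\ref{prop-3-3} with $s=\alpha$ (admissible since $\alpha\in(0,1)$): it is the squared form of the two-sided estimate \eqref{CC1}. So all the real content is in proving $\|u\|^2_{H_1}\asymp\|u\|^2_{H^\alpha(B_N)}$.

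For that the plan is to pass through the quadratic form of $D_N^\alpha$. By the discussion following Definition~\ref{def1-2}, and since $\mathfrak a_k\ge\lambda_0>0$ forces $\mathfrak a_k\le 1+\mathfrak a_k\le(1+\lambda_0^{-1})\mathfrak a_k$, the $H_1$-norm is equivalent to $\bigl(\|u\|^2_{L_2(B_N)}+(D_N^\alpha u,u)_{L_2(B_N)}\bigr)^{1/2}$; and since $D_N^\alpha=\cP_{N,\alpha}+\lambda_0 I$ by \eqref{PNal}, it suffices to understand $(\cP_{N,\alpha}u,u)_{L_2(B_N)}$. Here I see two routes. One uses Theorem~\ref{Th-hitchhiker}: combined with the Plancherel identity \eqref{31-46} in polarized form $(f,g)_{L_2(B_N)}=p^N\sum_{\xi\in\widehat B_N}\cF_N f(\xi)\,\overline{\cF_N g(\xi)}$, it yields $(\cP_{N,\alpha}u,u)_{L_2(B_N)}=\sum_{\xi\in\widehat B_N}|\xi|_p^\alpha\,|\cF_N u(\xi)|^2$, hence, after absorbing the fixed constants and using $|\xi|_p\ge p^{1-N}$ for $\xi\ne0$, $\|u\|^2_{H_1}\asymp\sum_\xi\bigl(1+|\xi|_p^\alpha\bigr)|\cF_N u(\xi)|^2$, which is to be compared with $\|u\|^2_{H^\alpha(B_N)}=\sum_\xi\bigl(1+\|\xi\|^2\bigr)^\alpha|\cF_N u(\xi)|^2$ (recall $\|\xi\|=|\xi|_p$). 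The other route avoids $\widehat B_N$ altogether: substituting $z=x-y$ in \eqref{PNal} — a bijection of $B_N$ onto itself, because $B_N$ is a group — and symmetrizing in $x\leftrightarrow z$, both legitimate for $u\in\cD(B_N)$ since local constancy kills the singularity at $y=0$ (so the double integral is absolutely convergent), and then extending by density, gives
\[
(\cP_{N,\alpha}u,u)_{L_2(B_N)}=\frac{p^\alpha-1}{2\,(1-p^{-\alpha-1})}\int_{B_N}\int_{B_N}\frac{|u(x)-u(z)|^2}{|x-z|_p^{\alpha+1}}\,dx\,dz ,
\]
which is a positive multiple of a double integral of the form \eqref{Us}; combined with $\|u\|^2_{H_1}\asymp\|u\|^2_{L_2(B_N)}+(\cP_{N,\alpha}u,u)_{L_2(B_N)}$ and Theorem~\ref{prop-3-3}, this reduces the remaining equivalence to a matching of orders.

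The step I expect to be the main obstacle is exactly that matching of orders: one must line up the degree-$\alpha$ symbol $|\xi|_p^\alpha$ of $\cP_{N,\alpha}$ from Theorem~\ref{Th-hitchhiker} — equivalently, the kernel exponent $\alpha+1$ in \eqref{PNal} — with the correct exponent in Definitions~\ref{defSob} and \ref{defAGS} (the relevant Slobodecki exponent $s$ being read off from $2s+1=\alpha+1$), and check that the $N$- and $\alpha$-dependent constants ($p^N$, $\lambda_0$, and the lower bound $p^{1-N}$ on the nonzero $|\xi|_p$) are absorbed into the implicit constants of $\asymp$ uniformly in $u$. Once the orders are correctly tracked, the remainder is a routine assembly of Theorems~\ref{prop-3-3} and \ref{Th-hitchhiker}, the Plancherel identity \eqref{31-46}, and Definition~\ref{def1-2}.
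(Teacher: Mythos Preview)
Your plan coincides with the paper's: it too derives the first equivalence from Theorem~\ref{prop-3-3} and then writes $\|u\|_{H_1}^2\asymp\|u\|_{L_2(B_N)}^2+\|[\cP_{N,\al}]^{1/2}u\|_{L_2(B_N)}^2$ and invokes Theorem~\ref{Th-hitchhiker} (your Route~1; your Route~2 is an equivalent restatement via \eqref{Us}).

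However, the ``matching of orders'' you single out is not merely an obstacle: it actually fails, and this is something you should state outright rather than hedge on. Route~1 gives
\[
\|u\|_{H_1}^2\ \asymp\ \|u\|_{L_2(B_N)}^2+\sum_{\xi\in\widehat B_N}|\xi|_p^{\al}\,|\cF_Nu(\xi)|^2\ \asymp\ \|u\|_{H^{\al/2}(B_N)}^2,
\]
since $1+|\xi|_p^{\al}\asymp(1+|\xi|_p^2)^{\al/2}$ but \emph{not} $(1+|\xi|_p^2)^{\al}$; Route~2 likewise gives $(\cP_{N,\al}u,u)_{L_2}=c_{\al,p}\,[u]_{\al/2}^2$ with the Slobodecki index $s=\al/2$ you yourself read off from $2s+1=\al+1$, hence $\|u\|_{H_1}^2\asymp\|u\|_{H^{\al/2}_{AGS}}^2$ via Theorem~\ref{prop-3-3} at $s=\al/2$. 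In other words, the form domain $H_1$ of an operator with symbol $|\xi|_p^{\al}$ is $H^{\al/2}(B_N)$, not $H^{\al}(B_N)$: the weights $1+|\xi|_p^{\al}$ and $(1+|\xi|_p^2)^{\al}\sim|\xi|_p^{2\al}$ are not comparable as $|\xi|_p\to\infty$. The paper's own proof elides exactly this step (it passes from ``$u\in H^{\al}$ iff $|\xi|_p^{\al}\widehat u\in L_2$'' directly to ``from \eqref{PN} we have the result'', tacitly conflating $\sum_\xi|\xi|_p^{\al}|\widehat u|^2$ with $\sum_\xi|\xi|_p^{2\al}|\widehat u|^2$). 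The statement that your argument actually proves is $\|u\|_{H_1}^2\asymp\|u\|_{H^{\al/2}(B_N)}^2\asymp\|u\|_{H^{\al/2}_{AGS}}^2$; the downstream application in Lemma~\ref{L1-10} is unaffected by this relabeling.
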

\begin{proof} The first part of the isomorphism relation \eqref{eq-prop-3-4} follows from Theorem \ref{prop-3-3}. To prove second part first remark that
\[\Vert u \Vert_{H_1}\asymp\Vert u\Vert^2_{L_2(B_N)}+\Vert [\cP_{N,\al}]^{1/2}u\Vert^2_{L_2(B_N)}.\]
From Definition \ref{defSob} it follows that $u\in H^\al(B_N)$ is equivalent to $\vert \xi\vert_p^\al\, \widehat u(\xi)\in L_2(\widehat B_N)$. Finally from \eqref{PN} we have the result.
\end{proof}

The dual space $H_{-1}$ to $H_1$ is obtained as the closure of $L_2(B_N)$ w.r.t. the scalar product
\begin{equation}\Label{sc} \
(f,g)_{-1}=(f,g)_{L_2(B_N)}+([D_N^{\alpha}]^{-1}f, g)_{L_2(B_N)}.
\end{equation}

The dual space $H_{-1}$ is also called the distribution space. For the construction of the dual space in terms of unbounded self-adjoint operator with discrete spectrum acting in some Hilbert space we refer to \cite[Ch. XIV]{BER}.


If $u,v \in H_1$ and $f=(\DAN u)(x)$ then since $\DAN$ acts as an isomorphism between $H_1$ and $H_{-1}$ we have
$$(f,v)_{H_{-1}\times H_1}=([\DAN]^{-1}f,v)_{H_1\times H_1}=(u,v)_{H_1\times H_1}=\sum\limits_{k=1}^{\infty} \mathfrak{a}_k u_kv_k.$$
If $f \in L_2(B_N)$, then $ (f,v)_{H_{-1}\times H_1}= \int_{B_N}fv\,dy$ for $v\in H_1$.

\begin{definition}\Label{def-4-4}
A function $u \in C([0,T], H_{-1})$ is called an $H_{-1}$ - solution to equation \eqref{1-2} if $\varphi(u) \in L_1([0,T], H_1)$ and

$$\Label{1-8}\ \int\limits_0^T\int\limits_{B_N}u\,D_{t}\psi\,dy\,dt = \int\limits_{0}^T\int\limits_{B_N}\varphi(u)\,\DAN\psi \,dy\,dt$$ for any $\psi \in \cD(B_N)$, where $\cD(B_N)$ is the space of locally constant functions with compact support in $B_N$.
\end{definition}
Our main result is the following theorem about the existence of a weak solution to nonlinear problem \eqref{1-2} in sense of Definition \ref{def-4-4} for any $T>0$.
\begin{theorem}[{\bf Main result}]\Label{main}
For any $u_0 \in H_{-1}$ there exists a unique solution $u \in C([0,T], H_{-1})$ of problem \eqref{1-2} for every $T>0$. Moreover we have
$$t \vph(u) \in L_{\infty}([0,T],H_{-1})$$
$$t\,\partial_tu \in L_{\infty}([0,T],H_{-1})$$
We also have that $u\vph(u) \in L_1([0,T]\times B_N )$ and
the solution map $S_t: u_0 \rightarrow u(t)$ defines a semigroup of non-strict contractions in $H_{-1}$
\begin{equation}\Label{eq4-5}\
\Vert u(t)-v(t)\Vert_{H^{-1}}\leq\Vert u(0) - v(0)\Vert_{H^{-1}},
\end{equation}
{which turns out to be also compact in $H_{-1}$}.

\end{theorem}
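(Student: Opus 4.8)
The plan is to recast the Cauchy problem \eqref{1-2} as the gradient flow, in the Hilbert space $H_{-1}$, of a convex lower semicontinuous functional, and then to run the Brezis--Komura theory of nonlinear semigroups generated by subdifferentials; this is the classical $H^{-1}$-method for the porous medium equation, the non-Archimedean content being concentrated in Section~\ref{sec3} and Definitions~\ref{def1-2}: $\DAN$ is an isomorphism $H_1\to H_{-1}$ which is an isometry for the homogeneous norms, and $(\DAN v,v)_{L_2(B_N)}=\|v\|_{H_1}^2$. Put $j(r)=\int_0^r\vph(\tau)\,d\tau$; since $\vph$ is continuous, strictly increasing, onto $\RR$ and $\vph(0)=0$, $j$ is finite, convex, nonnegative with $j(0)=0$, and $\vph^{-1}$ is continuous, strictly increasing and onto $\RR$, with primitive $j^\ast(s)=\int_0^s\vph^{-1}(\tau)\,d\tau$ (the convex conjugate of $j$). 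Define $\Phi\colon H_{-1}\to(-\infty,+\infty]$ by $\Phi(u)=\int_{B_N}j(u(x))\,dx$ if $j(u)\in L_1(B_N)$ and $\Phi(u)=+\infty$ otherwise. Then $\Phi$ is proper ($\cD(B_N)\subset D(\Phi)$, since bounded locally constant functions have $j(u)$ bounded on the compact set $B_N$), convex, and lower semicontinuous on $H_{-1}$ (an $H_{-1}$-convergent sequence has a subsequence converging a.e.\ on $B_N$; apply Fatou), and $\overline{D(\Phi)}=H_{-1}$ because $\cD(B_N)$ is dense in $L_2(B_N)$, which is dense in $H_{-1}$.

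The core step is to identify the subdifferential of $\Phi$ in $H_{-1}$ with the operator $u\mapsto\DAN\vph(u)$, with domain $D(\partial\Phi)=\{u\in H_{-1}\colon\vph(u)\in H_1\}$. The inclusion $\DAN\vph(u)\in\partial\Phi(u)$ is the pointwise convexity of $j$: for $u\in D(\partial\Phi)$ and $w\in D(\Phi)$, using that $\DAN$ maps $H_1$ onto $H_{-1}$ (so the eigenvalue weights $\mathfrak{a}_k$ and $\mathfrak{a}_k^{-1}$ of Definition~\ref{def1-2} cancel and the pairing reduces to the $L_2$-pairing),
\[
(\DAN\vph(u),w-u)_{H_{-1}}=(\vph(u),w-u)_{L_2(B_N)}=\int_{B_N}\vph(u)\,(w-u)\,dx\le\int_{B_N}\bigl(j(w)-j(u)\bigr)\,dx=\Phi(w)-\Phi(u).
\]
For the converse inclusion it suffices to show that $u\mapsto\DAN\vph(u)$ is maximal monotone, i.e.\ that for every $f\in H_{-1}$ the stationary equation $u+\DAN\vph(u)=f$ has a solution in $D(\partial\Phi)$. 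Setting $v=\vph(u)$ this becomes $\vph^{-1}(v)+\DAN v=f$, which is the Euler--Lagrange equation of the strictly convex, lower semicontinuous, coercive functional $v\mapsto\int_{B_N}j^\ast(v)\,dx+\tfrac12(\DAN v,v)_{L_2(B_N)}-(f,v)_{H_{-1}\times H_1}$ on the finite-energy part of $H_1$; coercivity in the $H_1$-norm is exactly $(\DAN v,v)_{L_2(B_N)}=\|v\|_{H_1}^2$ together with $j^\ast\ge0$, and $H_1\hookrightarrow L_2(B_N)$ is compact (Section~2.3), so a minimizer $v$ exists by the direct method, and $u=\vph^{-1}(v)=f-\DAN v\in H_{-1}$ solves the stationary equation. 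A monotone operator contained in a maximal monotone one and itself maximal must coincide with it, whence $\partial\Phi=\DAN\vph$. \emph{This is the main obstacle}: the boundarylessness of $B_N$ forces the quadratic form $(\DAN v,v)$ — and hence the whole Hilbert triple $H_1\hookrightarrow L_2(B_N)\hookrightarrow H_{-1}$ — to be built from the Pontryagin-dual harmonic analysis of Section~\ref{sec3} (this is where Theorems~\ref{Th-hitchhiker}--\ref{isom} enter), and the various duality pairings must be handled with care; for the model $\vph(u)=|u|^{m-1}u$ with $m<1$ one further restricts the minimization to $\{v\in H_1\colon|v|^{1/m+1}\in L_1(B_N)\}$.

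Now apply the Brezis--Komura theorem to $\partial\Phi$: for every $u_0\in\overline{D(\Phi)}=H_{-1}$ there is a unique $u\in C([0,T],H_{-1})$ with $u(t)\in D(\partial\Phi)$ and $\partial_tu(t)+\DAN\vph(u(t))=0$ in $H_{-1}$ for a.e.\ $t\in(0,T)$; $u$ is Lipschitz on every $[\delta,T]$ and $t\mapsto\|\partial_tu^+(t)\|_{H_{-1}}$ is nonincreasing. The standard regularizing estimate, applied with the reference point $z=0\in D(\partial\Phi)$ (note $\vph(0)=0$, so $\partial\Phi(0)=0$), gives $t\,\|\partial_tu(t)\|_{H_{-1}}\le\sqrt2\,\|u_0\|_{H_{-1}}$; since $\DAN$ is an isometry $H_1\to H_{-1}$ and $\|g\|_{H_{-1}}\le\mathfrak{a}_1^{-1}\|g\|_{H_1}$, this yields $\|\vph(u(t))\|_{H_1}=\|\partial_tu(t)\|_{H_{-1}}\le\tfrac{\sqrt2}{t}\|u_0\|_{H_{-1}}$, whence $t\vph(u)\in L_\infty([0,T],H_{-1})$ and $t\,\partial_tu=-t\DAN\vph(u)\in L_\infty([0,T],H_{-1})$. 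Testing the equation against $u(t)$ in the $H_{-1}$ scalar product, $\tfrac12\tfrac{d}{dt}\|u(t)\|_{H_{-1}}^2=-(\DAN\vph(u),u)_{H_{-1}}=-\int_{B_N}u\,\vph(u)\,dx\le0$; integrating on $[\delta,T]$ and letting $\delta\to0$ (with $u(\delta)\to u_0$ in $H_{-1}$ and monotone convergence, as $u\vph(u)\ge0$) gives $\int_0^T\!\!\int_{B_N}u\,\vph(u)\,dx\,dt\le\tfrac12\|u_0\|_{H_{-1}}^2$, i.e.\ $u\vph(u)\in L_1([0,T]\times B_N)$. On each $[\delta,T]$ the energy identity $\tfrac{d}{dt}\Phi(u(t))=-\|\vph(u(t))\|_{H_1}^2$ gives $\vph(u)\in L_2([\delta,T],H_1)$; it holds globally (so $\vph(u)\in L_1([0,T],H_1)$) when $\int_{B_N}j(u_0)\,dx<\infty$, and $\vph(u)\in L^1_{\mathrm{loc}}((0,T],H_1)$ in general. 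Pairing $\partial_tu=-\DAN\vph(u)$ with a test function $\psi=\psi(t,y)$ that is smooth and compactly supported in $t\in(0,T)$ with $\psi(t,\cdot)\in\cD(B_N)$, using $(\DAN\vph(u),\psi)_{H_{-1}\times H_1}=\int_{B_N}\vph(u)\,\DAN\psi\,dy$ and integrating by parts in $t$, produces exactly the identity of Definition~\ref{def-4-4}, and $u(0)=u_0$ follows from $u\in C([0,T],H_{-1})$. Conversely, any solution in the sense of Definition~\ref{def-4-4} satisfies $\partial_tu=-\DAN\vph(u)$ in $H_{-1}$ a.e.\ (density of $\cD(B_N)$ in $H_1$, $\vph(u(t))\in H_1$ a.e.), hence is the gradient-flow solution, so uniqueness transfers. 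Finally \eqref{eq4-5} is the monotonicity estimate $\tfrac12\tfrac{d}{dt}\|u-v\|_{H_{-1}}^2=-\int_{B_N}\bigl(\vph(u)-\vph(v)\bigr)(u-v)\,dx\le0$, and $S_t\colon u_0\mapsto u(t)$ is a semigroup by autonomy and uniqueness.

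For compactness of $S_t$ with $t>0$: if $\mathcal B\subset H_{-1}$ is bounded, the regularizing estimate makes $\{\vph(S_tu_0)\colon u_0\in\mathcal B\}$ bounded in $H_1$; since $\mathfrak{a}_k\to\infty$ (the spectrum of $\DAN$ on $B_N$, Section~2.3), the embeddings $H_1\hookrightarrow L_2(B_N)\hookrightarrow H_{-1}$ are both compact, so this family is precompact in $L_2(B_N)$. Given $u_0^n\in\mathcal B$, extract a subsequence with $\vph(u^n(t))\to v$ in $L_2(B_N)$ and a.e.; then $u^n(t)=\vph^{-1}(\vph(u^n(t)))\to\vph^{-1}(v)$ a.e. For the model $\vph(u)=|u|^{m-1}u$ with $m\ge1$ the Nemytskii map $\vph^{-1}$ has sublinear growth, hence is continuous $L_2(B_N)\to L_2(B_N)$, so $u^n(t)\to\vph^{-1}(v)$ in $L_2(B_N)\hookrightarrow H_{-1}$; in general one uses in addition that $u^n(t)$ is bounded in $H_{-1}$ and runs a Minty-type argument based on the monotonicity of $\vph$, identifying the weak $L_2$-limit with $\vph^{-1}(v)$ and upgrading to strong $H_{-1}$-convergence via $\|u^n(t)\|_{H_{-1}}^2=([\DAN]^{-1}u^n(t),u^n(t))_{L_2(B_N)}$ and the precompactness of $\{[\DAN]^{-1}u^n(t)\}$ in $L_2(B_N)$. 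Hence $S_t\mathcal B$ is relatively compact in $H_{-1}$, completing the proof of Theorem~\ref{main}.
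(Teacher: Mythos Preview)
Your overall architecture is the paper's: recast \eqref{1-2} as the $H_{-1}$-gradient flow of $\Phi(u)=\int_{B_N}j(u)$, identify $\partial\Phi$ with $u\mapsto\DAN\vph(u)$, then invoke the Brezis--Komura theory. Two steps, however, are genuinely incomplete as written.

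\textbf{Lower semicontinuity of $\Phi$.} The claim that an $H_{-1}$-convergent sequence has an a.e.\ convergent subsequence is false: $H_{-1}$ is a negative-order space and contains non-functions. The paper's route (its Lemma after Theorem~\ref{main}) is the correct one: from $\int_{B_N}j(u_n)\le C$ and $j(r)/|r|\to\infty$ one gets uniform integrability of $\{u_n\}$, hence weak $L_1$ compactness by Dunford--Pettis; the weak $L_1$ limit must coincide with the $H_{-1}$ limit, and then convex lsc in $L_1$ transfers.

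\textbf{The inclusion $\DAN\vph(u)\in\partial\Phi(u)$.} Your one-line inequality
\[
(\DAN\vph(u),w-u)_{H_{-1}}=\int_{B_N}\vph(u)(w-u)\,dx\le\int_{B_N}\!\bigl(j(w)-j(u)\bigr)\,dx
\]
hides two nontrivial points: (i) for $w\in D(\Phi)$ one only has $w\in L_1(B_N)$, not $L_2$, so $\vph(u)(w-u)$ is not obviously integrable and the duality pairing cannot be written as a Lebesgue integral without justification; (ii) it is not automatic that $j(u)\in L_1(B_N)$ (i.e.\ $u\in D(\Phi)$) when $\vph(u)\in H_1$. The paper handles both via its Lemma~\ref{L1-10}: one truncates $\vph(u)$ to bounded $w_n$, uses Theorem~\ref{isom} to see that $w_n$ stays bounded in $H^\al_{AGS}\simeq H_1$, and passes to the limit with the compact embedding $H_1\hookrightarrow L_2(B_N)$. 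This is exactly where the $p$-adic Sobolev machinery of Section~\ref{sec3} is actually consumed; your sketch bypasses it.

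\textbf{A genuine methodological difference.} For the surjectivity of $I+\DAN\vph$ (maximality), the paper solves $\eta(w)+\DAN w=f$ by Yosida-approximating $\eta=\vph^{-1}$, invoking Browder--Minty, and then running a delicate Dunford--Pettis/Egorov identification (Lemmas~\ref{L1-13}--\ref{Th18}). Your variational alternative---minimise $v\mapsto\int_{B_N}j^\ast(v)+\tfrac12(\DAN v,v)_{L_2}-(f,v)_{H_{-1}\times H_1}$ on $H_1$---is legitimate and shorter once coercivity and weak lsc are checked; it trades the approximation machinery for the direct method. Both routes need the compact embedding $H_1\hookrightarrow L_2(B_N)$ (discreteness of the spectrum, Section~2.3), but the paper's route makes more explicit use of the AGS characterisation in the truncation step above.

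The remaining items (regularising estimates, $u\vph(u)\in L_1$, the contraction \eqref{eq4-5}, compactness of $S_t$) are handled correctly in your sketch and agree with what the paper asserts.
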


Let us introduce some notations. Let $j:~\RR \rightarrow \RR^+$ be a convex, lower semi-continuous function such that its subdifferential $\partial j=\varphi$, $j(0) = 0$ and $j(r)/|r| \rightarrow \infty $ as $|r| \rightarrow \infty$.
For $u\in H_{-1}$ we define
$$\Psi(u)=\int_{B_N}j(u)\,dx,$$
if $u \in L_1(B_N) $ and $j(u) \in L_1(B_N)$, and $\Psi(u):= +\infty$ otherwise.

Let us recall that the {\it subdifferential of function} $j:~\RR \rightarrow \RR^+$ is given by
\begin{equation}\Label{gradj}\
\dd  j(u) =\big \{\rho \in \RR\col\, j(v)-j(u)\geq \rho\cdot(v-u), \ \forall v \in Dom(j) \},
\end{equation}
where $Dom(j) = \{ x \in \RR \col\, j(x)< +\infty\}$.

The proof of the main result Theorem \ref{main} follows the principal approach of Brezis \cite{Brezis:1971} with necessary modifications dictated by specific features of the $p$-adic situation, where classical properties of Sobolev spaces require a separate investigation. The proof is given in a set of lemmas.

\begin{lemma} The function $\Psi(u)$ is convex lower semi-continuous on $H_{-1}$.
\end{lemma}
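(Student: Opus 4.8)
The plan is to verify convexity and lower semicontinuity of $\Psi$ separately, exploiting the fact that $\Psi$ is a classical integral functional built from the convex integrand $j$, and that the topology on $H_{-1}$ is weaker than the $L_1(B_N)$-topology on the relevant sublevel sets. First I would establish convexity: for $u,v\in H_{-1}$ and $\lambda\in(0,1)$, if either $\Psi(u)$ or $\Psi(v)$ is $+\infty$ the inequality $\Psi(\lambda u+(1-\lambda)v)\le \lambda\Psi(u)+(1-\lambda)\Psi(v)$ is trivial; otherwise $u,v\in L_1(B_N)$ with $j(u),j(v)\in L_1(B_N)$, so $\lambda u+(1-\lambda)v\in L_1(B_N)$, and pointwise convexity of $j$ together with monotonicity of the integral gives the claim. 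Note that here one should first check that $\Psi$ is well-defined on all of $H_{-1}$ in the sense that the formula $\int_{B_N}j(u)\,dx$ makes sense (with value in $(-\infty,+\infty]$) — this uses $j\ge 0$, so the integral is always well-defined in $[0,+\infty]$.

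For lower semicontinuity, I would argue that it suffices to show the sublevel sets $\{u\in H_{-1}\colon \Psi(u)\le c\}$ are closed in $H_{-1}$ for every $c\in\RR$. Fix such a $c$ and take a sequence $u_n\to u$ in $H_{-1}$ with $\Psi(u_n)\le c$. The superlinearity hypothesis $j(r)/|r|\to\infty$ as $|r|\to\infty$ is the crucial ingredient: by the classical de la Vallée-Poussin criterion it forces the family $\{u_n\}$ to be uniformly integrable (equi-absolutely-continuous integrals) and $L_1(B_N)$-bounded, hence relatively weakly compact in $L_1(B_N)$ by the Dunford-Pettis theorem. Passing to a subsequence, $u_n\rightharpoonup w$ weakly in $L_1(B_N)$ for some $w\in L_1(B_N)$; the weak $L_1$-convergence implies convergence in $\cD'(B_N)$, and since $u_n\to u$ in $H_{-1}$ also implies convergence in $\cD'(B_N)$, we identify $w=u$, so in particular $u\in L_1(B_N)$. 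Finally, since $j$ is convex, nonnegative and lower semicontinuous, the integral functional $v\mapsto\int_{B_N}j(v)\,dx$ is sequentially weakly lower semicontinuous on $L_1(B_N)$ (Ioffe's theorem, or Mazur's lemma applied to a convex functional that is strongly l.s.c. by Fatou), whence $\Psi(u)=\int_{B_N}j(u)\,dx\le\liminf_n\int_{B_N}j(u_n)\,dx\le c$. This shows the sublevel set is closed, giving lower semicontinuity on $H_{-1}$.

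The main obstacle is the mismatch of topologies: $\Psi$ is naturally defined via an $L_1$-integral, but lower semicontinuity is asserted with respect to the weaker $H_{-1}$-topology, and a priori an $H_{-1}$-convergent sequence need not converge in $L_1$ at all. The superlinear growth of $j$ is exactly what repairs this — it upgrades an $H_{-1}$-bound on a sublevel set to $L_1$-weak compactness via Dunford-Pettis — so the heart of the argument is the interplay of de la Vallée-Poussin / Dunford-Pettis with the identification of the weak $L_1$-limit and the $H_{-1}$-limit through the common space of distributions $\cD'(B_N)$. A secondary point requiring a little care is the embedding $L_1(B_N)\hookrightarrow H_{-1}$ (equivalently $\cD'(B_N)$), needed to make the phrase ``$u_n\to u$ in $H_{-1}$'' and ``$u_n\rightharpoonup u$ in $L_1$'' comparable; since $B_N$ is compact this embedding is straightforward, as every $L_1$ function defines a distribution and the $H_{-1}$-pairing restricted to test functions coincides with the $L_1$-pairing, as already noted after Definition \ref{def1-2}.
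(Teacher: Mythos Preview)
Your proposal is correct and follows essentially the same route as the paper's own proof: superlinear growth of $j$ gives uniform integrability, Dunford--Pettis yields weak $L_1$-compactness, the weak $L_1$-limit is identified with the $H_{-1}$-limit, and weak lower semicontinuity of the integral functional closes the argument. Your write-up is in fact more careful in two places --- you make the limit identification explicit via $\cD'(B_N)$, and you invoke Mazur/Ioffe rather than Fatou for the weak l.s.c.\ step --- but the underlying strategy is the same.
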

\begin{proof}[Proof]
Let $u_n$ be a sequence of function such that $u_n \in H_{-1}\cap L_1(B_N)$, $u_n \to u $ in $ H_{-1}$ as $n \to \infty$ and $\int_{B_N}j(u_n(x))\,dx \leq C$.

Remark that for each $n \in \NN$ $\int_{B_N} u_n(x)\,dx$ is  uniformly absolutely continuous,
i.e. $\forall \vep>0\   \exists\,\delta>0 $ such that  $\forall E\col \ \mu(E)<\delta$ implies $\int_{E}u_n(x)\,dx < \varepsilon$.

Indeed, let $A>\frac{2C}{\vep}$ and $R$ be such that $j(r)/|r| \geq A$ for $|r|>R$. Let $\delta< \frac{\varepsilon}{2R}$, then we have
\begin{align}\nonumber
\int_{E}|u_n|\,dx &\leq \int\limits_{\{ x\ \in E \col |u_n(x)|\geq R \}}|u_n(x)|\,dx + \int\limits_{\{ x\ \in E \col |u_n(x)|< R \}}|u_n(x)|\,dx \leq\\
\nonumber
&\leq \int_{B_N}\frac{j(u_n(x))}{A}\,dx + R\delta \leq \frac{C}{A}+ R\delta < \varepsilon.
\end{align}
{By Dunford-Pettis theorem \cite[Theorem 9, Ch. IV, \S 8]{Dun}, which is valid for any measurable space, there is a subsequence such that $u_{n_k} \to \tilde{u}$ weakly in $L_1(B_N)$}.
Since we know that  $u_n \to u$ in $H_{-1}$ as $n\to \infty$, we conclude that $u_n \to u$ weakly in $L_1(B_N)$.
Finally the function $\Psi(u)$ is {convex} and by Fatou's lemma, $\int\limits_{B_N}j(u)\,dx \leq \liminf\limits_{n \to \infty}\int\limits_{B_N}j(u_n)\,dx$, it is lower semi-continuous function on $L_1(B_N)$ {and thus is weakly lower semi-continuous on $H_{-1}$}.
\end{proof}

\begin{cor}  The sub-differential $\partial\Psi$ is a maximal monotone operator in $H_{-1}$.
\end{cor}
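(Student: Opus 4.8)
The plan is to deduce the statement from the classical theorem of Rockafellar (see e.g. \cite{Barbu:2010} or \cite{Brezis:1971}): the subdifferential of a proper, convex, lower semi-continuous function on a (real) Hilbert space is a maximal monotone operator. Since $H_{-1}$ is a genuine Hilbert space by Definition \ref{def1-2}, and the preceding lemma already supplies the convexity and lower semi-continuity of $\Psi$ on $H_{-1}$, the only remaining point is to check that $\Psi$ is proper, i.e. that $\Psi\not\equiv+\infty$ and $\Psi$ never takes the value $-\infty$. The latter is clear because $j\geq 0$, so $\Psi\geq 0$; the former follows from $\Psi(0)=\int_{B_N}j(0)\,dx=0$ (more generally $\cD(B_N)\subset\{\Psi<\infty\}$, since for a locally constant $u$ with compact support in $B_N$ both $u$ and $j(u)$ are bounded, hence in $L_1(B_N)$). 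Monotonicity of $\partial\Psi$ is then automatic from the definition \eqref{gradj} of the subdifferential, and Rockafellar's theorem upgrades it to maximality.

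An equivalent and perhaps more self-contained route, should one wish to avoid quoting Rockafellar directly, is to verify the range condition $R(I+\lambda\,\partial\Psi)=H_{-1}$ for every $\lambda>0$, which by Minty's theorem is equivalent to maximal monotonicity for a monotone operator on a Hilbert space. Given $f\in H_{-1}$ and $\lambda>0$, one considers the functional $v\mapsto \tfrac{1}{2\lambda}\Vert v-f\Vert^2_{H_{-1}}+\Psi(v)$ on $H_{-1}$; it is proper, strictly convex, coercive and weakly lower semi-continuous (the quadratic term is weakly l.s.c.\ and $\Psi$ is weakly l.s.c.\ on $H_{-1}$ by the lemma), hence it attains its minimum at a unique point $v_\lambda$, and writing out the first-order optimality condition gives $f\in v_\lambda+\lambda\,\partial\Psi(v_\lambda)$.

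The substantive input here is the abstract convex-analysis theorem together with the fact, established in the previous lemma, that $\Psi$ is lower semi-continuous with respect to the $H_{-1}$-topology; the mild obstacle is simply to make sure that properness is genuine, i.e.\ that $\mathrm{Dom}(\partial\Psi)$ is nonempty, which we have ensured above via the dense class $\cD(B_N)$. No further $p$-adic input is needed for this corollary — the non-Archimedean subtleties enter only through the construction of $H_{-1}$ and the weak-compactness argument (Dunford–Pettis) already used in the lemma.
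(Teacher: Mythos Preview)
Your proposal is correct and follows essentially the same approach as the paper: invoke the standard convex-analysis fact that the subdifferential of a proper convex lower semi-continuous function on a Hilbert space is maximal monotone, the convexity and l.s.c.\ having been supplied by the preceding lemma. The paper's own proof is a one-line citation of this fact (to \cite{Min}); your additional check of properness via $\Psi(0)=0$ and your optional Minty-range-condition argument are welcome elaborations but not a different route.
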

\begin{proof}The function $\Psi$ is convex and lower semi-continious in $H_{-1}$, so that its sub-differential $\partial\Psi$ is a maximal monotone operator in $H_{-1}$ \cite{Min}.
\end{proof}
Let us define operator $A$ on $ H_{-1}$ by the following:
\begin{equation}\Label{Anon}\
	Au = \{\DAN w\col \  w \in H_1,\, w(x) \in \varphi(u(x))\ \text{for a.e.}\  x\in B_N \}
\end{equation}
with $u\in Dom (A)$ iff there is some $w\in H$ such that $w(x)\in \vph (u(x))$ for a.e. $x\in B_N$.

Our main goal is to prove that $A$ is maximal monotone, then this gives the main statement of Theorem \ref{main} on the existence of the solution. Inequality \eqref{eq4-5} is the consequence of maximal monotonicity by standard arguments similar to \cite{Kato:1970}(see also \cite{Komura:1967, CrandallPasy:1969, Brezis:1971}). The maximal monotonicity of $A$ will follow from the fact that $A \subset \partial\varPsi$. To prove the letter we need the following result.

\begin{theorem}\Label{L1-6}
\[f \in \partial\Psi(u) \Leftrightarrow [\DAN]^{-1}(f)(x) \in \varphi(u(x)) \text{\ for a.e \ } x\in B_N. \]

\end{theorem}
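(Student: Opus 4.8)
The goal is to identify $\partial\Psi(u)$, the subdifferential of $\Psi(u)=\int_{B_N}j(u)\,dx$ in the Hilbert space $H_{-1}$, with the condition that $[\DAN]^{-1}f$ is a measurable selection of $x\mapsto\varphi(u(x))=\partial j(u(x))$. The plan is to unwind the definition of the subdifferential relative to the $H_{-1}$ inner product, transfer it to an $L_2(B_N)$ pairing via the isometry $\DAN\colon H_1\to H_{-1}$, and then reduce to a pointwise (Legendre–Fenchel) characterization of $\partial j$.

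First I would write out what $f\in\partial\Psi(u)$ means: $u\in Dom(\Psi)$ (so $u\in L_1(B_N)$, $j(u)\in L_1(B_N)$) and $\Psi(v)-\Psi(u)\ge (f,v-u)_{-1}$ for all $v\in H_{-1}$. Using that $\DAN$ is an isomorphism from $H_1$ onto $H_{-1}$ (established via Theorems \ref{Th-hitchhiker} and \ref{isom}), set $g=[\DAN]^{-1}f\in H_1$; then by the computation preceding Definition \ref{def-4-4}, $(f,v-u)_{-1}=(f,v-u)_{H_{-1}\times H_1}$, and when $v-u$ lies in $L_2(B_N)$ this equals $\int_{B_N}g\,(v-u)\,dx$. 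So the subgradient inequality becomes
\[
\int_{B_N}j(v)\,dx-\int_{B_N}j(u)\,dx\ge\int_{B_N}g(x)\,(v(x)-u(x))\,dx
\]
for all admissible $v$. The direction $(\Leftarrow)$ is then immediate: if $g(x)\in\partial j(u(x))$ a.e., then $j(v(x))-j(u(x))\ge g(x)(v(x)-u(x))$ pointwise by \eqref{gradj}, and integrating gives the inequality; one only needs that the negative part of the integrand is integrable to legitimately integrate, which follows from $j\ge 0$ and $g(v-u)\in L_1$ for the relevant test functions, plus a density/truncation argument to pass from a convenient dense class of $v$ to all of $H_{-1}$.

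For the direction $(\Rightarrow)$ I would argue pointwise by localization: fix a ball $E\subset B_N$ of small measure and an arbitrary value $r\in\RR$, and test with $v=u+(r-u)\mathbf 1_E$ (or, to stay inside $L_1$ with $j(v)\in L_1$, with $v$ a locally constant perturbation of $u$ supported on $E$ taking a value near $r$). Dividing the resulting inequality by $\mu(E)$ and shrinking $E$ to a Lebesgue point of $u$, $g$ and $j(u)$, one obtains $j(r)-j(u(x))\ge g(x)(r-u(x))$ for a.e. $x$ and every rational $r$, hence for all $r$ by lower semicontinuity of $j$; that is exactly $g(x)\in\partial j(u(x))=\varphi(u(x))$ a.e. The main obstacle is the technical handling of admissible test functions $v$: one must ensure $j(v)\in L_1(B_N)$ and that $v-u\in L_2(B_N)$ (so the pairing collapses to an honest integral) while still retaining enough freedom to localize; the superlinear growth $j(r)/|r|\to\infty$ and the finiteness of $\mu(B_N)$ are what make the truncation work, much as in the lemma above using the Dunford–Pettis/de la Vallée Poussin circle of ideas. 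A secondary subtlety is justifying the interchange of the pairing $(f,v-u)_{H_{-1}\times H_1}$ with $\int_{B_N}g(v-u)\,dx$ when $v-u$ is merely in $H_{-1}$ and not in $L_2$; this is resolved by first proving the equivalence for $v-u$ in the dense subspace $\cD(B_N)$ and then extending by the convexity and lower semicontinuity of both sides.
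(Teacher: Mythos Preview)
Your $(\Leftarrow)$ direction matches the paper's Lemma~\ref{L1-11} in spirit. The paper, however, isolates a nontrivial technical step (Lemma~\ref{L1-10}) to show that the integrated pointwise inequality really yields $\Psi(v)-\Psi(u)\ge(f,v-u)_{H_{-1}}$ for \emph{every} $v\in Dom(\Psi)$, not just those with $v-u\in L_2(B_N)$: one truncates $w=g$ at level $n$, controls the truncations in $H_1$ via the seminorm $[\,\cdot\,]_s$ of Theorem~\ref{isom}, and passes to the limit using the compact embedding $H^\alpha(B_N)\hookrightarrow L_r(B_N)$. Your ``density/truncation'' remark points in this direction but does not carry it out.

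For $(\Rightarrow)$ the two arguments are entirely different. The paper never localizes; it defines $Au=\{\DAN w:w\in H_1,\ w(x)\in\varphi(u(x))\ \text{a.e.}\}$, shows $A\subset\partial\Psi$, and then proves in Lemma~\ref{L1-13} that $A$ is itself maximal monotone by solving the resolvent equation $\eta(w)+\DAN w=f$ (with $\eta=\varphi^{-1}$) through Yosida approximation of $\eta$, uniform $H_1$-bounds on $w_\mu$, and a Dunford--Pettis/Egorov compactness argument to identify the limit. Maximality of both $A$ and $\partial\Psi$ then forces $A=\partial\Psi$. Your Lebesgue-point route is genuinely different and more elementary; if rigorous, it yields the maximal monotonicity of $A$ as a free corollary rather than as the engine of the proof.

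Your localization does, however, have a concrete gap as written: the perturbation $v=u+(r-u)\mathbf 1_E$ need not lie in $Dom(\Psi)\subset H_{-1}$, because $u\mathbf 1_E$ is only known to be in $L_1(B_N)$, and one does not have $L_1(B_N)\subset H_{-1}$ for general $\alpha\in(0,1)$; correspondingly $(f,v-u)_{H_{-1}}$ need not collapse to $\int_{B_N} g(v-u)\,dx$. The standard repair is to replace $E$ by $F=E\cap\{|u|\le M\}$, so that $(r-u)\mathbf 1_F\in L_\infty\subset L_2\subset H_{-1}$ and the pairing does collapse; Lebesgue differentiation on the doubling space $(B_N,|\cdot|_p,dx)$ then gives $j(r)-j(u(x))\ge g(x)(r-u(x))$ a.e.\ on $\{|u|\le M\}$, and letting $M\to\infty$ finishes. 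With this fix your argument is complete and shorter than the paper's.
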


\begin{lemma}\Label{L1-10}
Let $F\in H_{-1} \cap L_1(B_N)$ and let $w \in H_1$. Let $g \in L_1(B_N)$ and $h$ be  measurable function such that
\begin{equation}\ \Label{28}\
g \leq h \leq F \cdot w, \ \text{for a.e.}\   x\in B_N.
\end{equation}
 Then $h \in L_1(B_N)$ and
 \begin{equation}\Label{43}\
 \int_{B_N}h\,dx \leq \int_{B_N}F(x)\cdot w(x)\,dx = (F,w)_{H_{-1}\times H_1  }
 \end{equation}
 holds. Here  $(F,w)_{H_{-1}\times H_1}$ means the duality pairing between $H_{-1}$ and $H_1$.
\end{lemma}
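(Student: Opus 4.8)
The plan is to reduce both assertions of the lemma — that $h\in L_1(B_N)$ and that \eqref{43} holds — to the single identity
\[
\int_{B_N}F(x)\,w(x)\,dx=(F,w)_{H_{-1}\times H_1}\qquad\text{valid for }F\in H_{-1}\cap L_1(B_N),\ w\in H_1 ,
\]
once we know that $Fw$ has integrable negative part. Indeed, from $g\le h\le F\cdot w$ a.e.\ and $g\in L_1(B_N)$ we get $h^-\le g^-\in L_1(B_N)$, and wherever $Fw<0$ we have $h<0$ and $|h|\ge|Fw|$, so also $(Fw)^-\le h^-\le g^-\in L_1(B_N)$; hence $h$ and $Fw$ both have integrable negative parts, and the pointwise bound $h\le Fw$ combined with $\int_{B_N}Fw\,dx=(F,w)_{H_{-1}\times H_1}<\infty$ forces $\int_{B_N}h^+\,dx<\infty$. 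Thus $h\in L_1(B_N)$ and $\int_{B_N}h\,dx\le\int_{B_N}Fw\,dx=(F,w)_{H_{-1}\times H_1}$, which is \eqref{43}.

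It remains to prove the displayed identity, and I would first do it for bounded $w$, i.e.\ $w\in H_1\cap L_\infty(B_N)$. The point is to approximate $w$ by $p$-adic step functions without inflating the sup-norm. For $M\to-\infty$ let $E_M$ be the averaging (conditional-expectation) operator onto functions constant on the cosets of the subball $B_M$, $E_Mw(x)=p^{-M}\int_{x+B_M}w(y)\,dy$. These operators are contractions on $L_2(B_N)$ and on $L_\infty(B_N)$, and — here the ultrametric is essential — they also contract the Gagliardo seminorm $[\,\cdot\,]_\alpha$ of Definition~\ref{defAGS}: if $x\not\equiv y\pmod{B_M}$ then $|x'-y'|_p=|x-y|_p$ for all $x'\in x+B_M$, $y'\in y+B_M$, so averaging over $x+B_M$ and $y+B_M$ and applying Jensen's inequality gives $[E_Mw]_\alpha\le[w]_\alpha$. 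Since $E_M$ fixes every $\psi\in\mathcal D(B_N)$ once $B_M$ lies below the scale of local constancy of $\psi$, and $\mathcal D(B_N)$ is dense in $H_1\cong H^\alpha_{AGS}(B_N)$ by Theorem~\ref{isom}, uniform boundedness yields $E_M\to\mathrm{id}$ strongly on $H_1$. Hence $w^{(M)}:=E_Mw\in\mathcal D(B_N)$ satisfies $w^{(M)}\to w$ in $H_1$, $\|w^{(M)}\|_\infty\le\|w\|_\infty$, and (passing to a subsequence) $w^{(M)}\to w$ a.e. Consequently $(F,w^{(M)})_{H_{-1}\times H_1}\to(F,w)_{H_{-1}\times H_1}$; moreover $(F,w^{(M)})_{H_{-1}\times H_1}=\int_{B_N}Fw^{(M)}\,dx$ because $w^{(M)}$ is a test function and the $H_{-1}$–$H_1$ pairing restricts to the distributional pairing on $\mathcal D(B_N)$; and $\int_{B_N}Fw^{(M)}\,dx\to\int_{B_N}Fw\,dx$ by dominated convergence, the dominating function being $\|w\|_\infty|F|\in L_1(B_N)$. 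This gives the identity for bounded $w$, and in particular $Fw\in L_1(B_N)$ there.

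To drop boundedness, truncate: put $w_k=\max(-k,\min(w,k))$, so $w_k\in H_1\cap L_\infty(B_N)$ with $|w_k|\le|w|$ and $|w_k(x)-w_k(y)|\le|w(x)-w(y)|$; hence $w_k\to w$ in $L_2(B_N)$ and $[w_k-w]_\alpha\to0$ by dominated convergence on $B_N\times B_N$, so $w_k\to w$ in $H_1$. By the bounded case, $\int_{B_N}Fw_k\,dx=(F,w_k)_{H_{-1}\times H_1}\to(F,w)_{H_{-1}\times H_1}$. Since $w_k$ has the sign of $w$ and $|Fw_k|=|F|\min(|w|,k)\uparrow|Fw|$, one has $(Fw_k)^\pm\uparrow(Fw)^\pm$ pointwise; as $(Fw)^-\le h^-\in L_1(B_N)$, monotone convergence gives $\int(Fw_k)^-\to\int(Fw)^-<\infty$ and $\int(Fw_k)^+\to\int(Fw)^+\in[0,\infty]$. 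Because $\int Fw_k\,dx=\int(Fw_k)^+-\int(Fw_k)^-$ tends to the finite number $(F,w)_{H_{-1}\times H_1}$, we must have $\int(Fw)^+<\infty$; so $Fw\in L_1(B_N)$ and $\int_{B_N}Fw\,dx=(F,w)_{H_{-1}\times H_1}$, which is what was needed.

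I expect the main obstacle to be exactly the bounded-$w$ identity — more precisely, manufacturing a test-function approximation of $w\in H_1\cap L_\infty(B_N)$ that converges in $H_1$ and stays uniformly bounded. Mollification, the usual device in the Archimedean case, is replaced here by the coset-averages $E_M$, whose contractivity on the Gagliardo seminorm rests on the ultrametric fact that $|x-y|_p$ is constant on products of small balls — a genuinely non-Archimedean effect, in line with Section~\ref{sec3}.
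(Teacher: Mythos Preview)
Your proof is correct and shares with the paper the essential mechanism: truncate $w$ to $w_k=\max(-k,\min(w,k))$, use the pointwise Lipschitz bound $|w_k(x)-w_k(y)|\le|w(x)-w(y)|$ together with Theorem~\ref{isom} and dominated convergence in the Gagliardo integrand to get $w_k\to w$ strongly in $H_1$, and then pass to the limit. Where you diverge from the paper is in the surrounding architecture. The paper multiplies the chain $g\le h\le Fw$ by $w_n/w$ to obtain $g_n\le h_n\le Fw_n$, uses $(F,w_n)_{H_{-1}\times H_1}=\int Fw_n$ for the bounded truncations without further comment, and then applies Fatou's lemma to $h_n-g_n$; it also invokes the compact embedding of $H^\alpha(B_N)$ into $L_r(B_N)$ to upgrade weak to strong convergence. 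You instead isolate the duality identity $\int Fw=(F,w)_{H_{-1}\times H_1}$ as the heart of the lemma, reduce everything else to the elementary observation $(Fw)^-\le h^-\le g^-\in L_1$ plus monotone convergence on $(Fw_k)^\pm\uparrow(Fw)^\pm$, and for the bounded case supply an explicit $p$-adic mollification via the coset averages $E_M$, whose contractivity on $[\,\cdot\,]_\alpha$ you derive from the ultrametric constancy of $|x-y|_p$ on products of small balls. Your route is a bit longer but more self-contained: it avoids the compact-embedding citation, and it actually justifies the bounded-$w$ pairing identity that the paper takes for granted.
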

\begin{proof}[Proof]
Let \begin{equation}\Label{1-14-0}\
w_n =
 \begin{cases}
   n, &\text{if $w \geq n$}\\
   w, &\text{if $|w| \leq n$}\\
   -n, &\text{if $w \leq -n$}
 \end{cases}
\end{equation}
and $h_n := h\frac{w_n}{w}, g_n :=g\frac{w_n}{w}$.

Let us remark that $w_n\in H_1$ for $w\in H_1$. To check this first of all note that $w_n\in L_2(B_N)$ for $w\in H_1$ due to the inequality: $\vert w_n\vert \leq \vert w\vert$ a.e. on $B_N$. Moreover, by Definition~\ref{def1-2} of $H_1$ $w_n\in H_1$ iff $\vert w_n\vert \in H_1$. For the orthogonal basis $\{\psi_k\}_{k\geq 1}$ in $L_2(B_N)$ the Fourier coefficients $c_k = (\vert w\vert ,\psi_k)_{L_2(B_N)}$ and $c_k^n=(\vert w_n\vert ,\psi_k)_{L_2(B_N)}$ in the decompositions
$$\vert w\vert = \sum\limits_{k=0}^{\infty} c_k \psi_k, \quad \vert w_n\vert = \sum\limits_{k=0}^{\infty} c^n_{k} \psi_{k}$$
are connected by the following inequality $\vert c^n_{k} \vert \leq \vert c_k \vert.$ Thus  $w_n \in H_1$, if $w\in H_1$. Moreover the sequence $\{\vert w_n\vert \}_{n\geq 1}$ and therefore $\{w_n\}_{n\geq 1}$ are bounded in $H_1$.

Multiplying inequality \eqref{28} by $\frac{w_n}{w}$, we get
$$g_n \leq h_n\leq F\cdot w_n \  \text{for a.e.}\  x\in  B_N.$$
Therefore
$$0\leq h_n-g_n\leq F\cdot w_n - g_n\ \text{ for a.e.}\  x \in B_N.$$
Since $w_n \to w$ a.e. on $B_N$, we have $h_n - g_n \rightarrow h-g$ a.e. on $B_N$ as $n\to \infty$. We also have that
\begin{equation}\Label{wn}\
\int_{B_N}(h_n-g_n)\,dx\leq \int_{B_N}F\cdot w_n\, dx -\int_{B_N} g_n \,dx= (F,w_n)_{H^{-1}\times H} - \int_{B_N} g_n \,dx.
\end{equation}

Further, we will apply Theorem 11 in \cite{GorkaKostrRey:2014}, which in our notations states the following. Since $\dfrac{1}{1+\vert \xi\vert_p^2}\in L_q(\widehat B_N)$ for any $q>1$, we have for $\al\in (0,1)$ (thus $\al < q$) the compact embedding of the space $H^\al (B_N)$ into $L_r(B_N)$ for all $2<r<q^*$, where $q^*=\frac{2q}{q-\al} > 2$.

From the definition of $w_n$ \eqref{1-14-0} we have that
\begin{equation}\Label{eq14-1}\
\vert w_n(x)-w_n(y)\leq \vert w(x)-w(y)\vert, \quad \text{\ for all}\ x,y \in B_N.
\end{equation}
Therefore, due to the Theorem \ref{isom} the sequence $\{w_n\}_{n\geq 1}$ is uniformly bounded in $H^\al_{AGS}$, and we may assume its weak convergence. Moreover, it follows from definition that $\frac{w_n}{w}\to 1$ as $n\to\infty$ and $\vert\frac{w_n}{w}\vert\leq 1$. Therefore $w_n\to w$, a.e. on $B_N$ as $n\to\infty$. Using the isomorphism properties (Theorem \ref{isom}) and the above-mentioned compact embedding, we have that $w_n\to w$, $n\to\infty$ in $L_r(B_N)$ strongly, and therefore in $L_2(B_N)$ $(r>2)$.

Using \eqref{eq14-1} together with the dominated convergence theorem in $H^\al_{AGS}$ we can pass to the limit. Together with the strong $L_2$ convergence this gives the strong convergence in $H^\al_{AGS}$ and therefore in $H_1$. We have also that $g_n\to g$, $n\to \infty$ in $L_1(B_N)$ and from \eqref{wn} by Fatou's Lemma we conclude that $h-g \in L_1(B_N)$ and thus $h\in L_1(B_N)$ together with
$$\int_{B_N}(h-g)\,dx\leq (F,w)_{H_{-1} \times H_1} - \int_{B_N}g\,dx<\infty,$$
which implies the statement.
\end{proof}

\begin{lemma}\Label{L1-11}
$A \subset \partial\Psi.$
\end{lemma}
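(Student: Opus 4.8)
The plan is to show that whenever $f \in Au$ for some $u \in \mathrm{Dom}(A)$, one has $f \in \partial\Psi(u)$, i.e. $\Psi(v) - \Psi(u) \geq (f, v-u)_{H_{-1}}$ for every $v \in H_{-1}$. By the definition \eqref{Anon} of $A$, there is a function $w \in H_1$ with $w(x) \in \varphi(u(x))$ for a.e.\ $x \in B_N$ and $f = \DAN w$. Invoking Theorem~\ref{L1-6}, the condition $w(x) = [\DAN]^{-1}(f)(x) \in \varphi(u(x))$ a.e.\ is exactly equivalent to $f \in \partial\Psi(u)$; so in principle the lemma is immediate once Theorem~\ref{L1-6} is in hand. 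However, to keep the argument self-contained (and because one direction of Theorem~\ref{L1-6} is what actually needs Lemma~\ref{L1-10}), I would spell out the subdifferential inequality directly, which is where the real work lies.

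First I would record the pointwise inequality coming from the definition \eqref{gradj} of the subdifferential of the scalar function $j$: since $w(x) \in \varphi(u(x)) = \partial j(u(x))$ a.e., for every $v \in H_{-1}$ and a.e.\ $x \in B_N$ we have
\[
j(v(x)) - j(u(x)) \geq w(x)\,\big(v(x) - u(x)\big).
\]
If $\Psi(v) = +\infty$ there is nothing to prove, so assume $v \in L_1(B_N)$ with $j(v) \in L_1(B_N)$; likewise $u \in L_1(B_N)$, $j(u) \in L_1(B_N)$. The goal is to integrate the pointwise inequality over $B_N$ and identify the right-hand side with the duality pairing $(f, v-u)_{H_{-1}\times H_1}$. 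The integrability of $v - u$ is clear, but the product $w\cdot(v-u)$ need not be integrable a priori, and this is exactly the obstruction Lemma~\ref{L1-10} was designed to remove: applying it with $F = v - u \in H_{-1}\cap L_1(B_N)$, the fixed $w \in H_1$, the lower bound $g := j(v) - j(u) - \big(\text{something integrable}\big)$—more precisely, set $h := w\cdot(v-u)$ and $g := j(v(\cdot)) - j(u(\cdot))$, noting $g \leq h$ pointwise and $g \in L_1(B_N)$—gives $h \in L_1(B_N)$ together with
\[
\int_{B_N} w(x)\,(v(x)-u(x))\,dx \;\leq\; (v-u, w)_{H_{-1}\times H_1}.
\]
Wait—one must be careful which factor plays the role of "$F$" and which of "$w$"; since $w \in H_1$ and $v-u \in H_{-1}\cap L_1$, I apply Lemma~\ref{L1-10} with its $F$ equal to our $v-u$ and its $w$ equal to our $w$, and the duality pairing $(F,w)_{H_{-1}\times H_1}$ equals $(v-u, w)_{H_{-1}\times H_1} = (f, v-u)_{H_{-1}\times H_1}$ because $f = \DAN w$ and $\DAN$ is the isometric identification between $H_1$ and $H_{-1}$ (as recorded just before Definition~\ref{def-4-4}).

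Combining these, integration of the pointwise subdifferential inequality yields
\[
\Psi(v) - \Psi(u) = \int_{B_N}\big(j(v) - j(u)\big)\,dx \;\geq\; \int_{B_N} w\,(v-u)\,dx,
\]
and a second application of Lemma~\ref{L1-10}—this time to $h := -\,w\cdot(v-u)$, or equivalently by running the estimate with $u$ and $v$ interchanged in the scalar inequality $j(u(x)) - j(v(x)) \geq w(x)(u(x)-v(x))$—controls the sign so that in fact $\int_{B_N} w(v-u)\,dx = (f, v-u)_{H_{-1}\times H_1}$ exactly, not merely with an inequality. Hence $\Psi(v) - \Psi(u) \geq (f, v-u)_{H_{-1}\times H_1}$ for all $v \in H_{-1}$, which is precisely $f \in \partial\Psi(u)$. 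Since $f \in Au$ was arbitrary, $A \subset \partial\Psi$. The main obstacle throughout is the integrability/sign bookkeeping around the product $w\cdot(v-u)$: both factors lie only in dual Sobolev-type spaces with no product structure, and it is Lemma~\ref{L1-10} (and through it the compact embedding $H^\al(B_N)\hookrightarrow L_r(B_N)$ from Theorem~\ref{isom}) that makes the pairing $\int_{B_N} w(v-u)\,dx$ legitimate and equal to the Hilbert-space duality bracket; getting the two-sided bound rather than a one-sided one requires applying Lemma~\ref{L1-10} symmetrically.
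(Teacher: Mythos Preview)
Your overall plan---use the pointwise subdifferential inequality $j(v(x))-j(u(x))\geq w(x)(v(x)-u(x))$, feed it through Lemma~\ref{L1-10}, and identify the result with the duality pairing $(f,v-u)_{H_{-1}}$---matches the paper's. But your execution has two real problems, and the paper's single clean choice of $h$ fixes both at once.

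\medskip
\textbf{The sign and the choice of $h$.} You set $h:=w\,(v-u)$ and $g:=j(v)-j(u)$ and assert $g\leq h$. The pointwise inequality goes the other way: $j(v)-j(u)\geq w(v-u)$, so with your choices $g\geq h$, not $g\leq h$, and Lemma~\ref{L1-10} does not apply. Even if it did, you would obtain $\int w(v-u)\,dx\leq (v-u,w)_{H_{-1}\times H_1}$, which chains the \emph{wrong} way with $\Psi(v)-\Psi(u)\geq \int w(v-u)\,dx$; hence your attempt to upgrade to an equality via a ``second application.'' That second application again needs an $L_1$ lower bound for $w(u-v)$, and the only candidate coming from the subdifferential inequality is $j(u)-j(v)$---which is in $L_1$ only if $j(u)\in L_1$.

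\medskip
\textbf{The unjustified assumption $j(u)\in L_1$.} You write ``likewise $u\in L_1(B_N)$, $j(u)\in L_1(B_N)$,'' but for $u\in\mathrm{Dom}(A)$ the integrability of $j(u)$ is not given; showing $u\in\mathrm{Dom}(\Psi)$ is part of what has to be proved. The paper deduces it \emph{from} Lemma~\ref{L1-10} rather than assuming it.

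\medskip
\textbf{How the paper does it.} Take $F:=u-v$, $h:=j(u)-j(v)$, and a lower bound $g$ built from the superlinear growth $j(r)\geq C_1|r|+C_2$ together with $j(v)\in L_1$, $u\in L_1$. Then $g\leq h\leq F\cdot w$ holds a.e.\ (the right inequality is exactly the subdifferential inequality rewritten), $g\in L_1$, and a \emph{single} application of Lemma~\ref{L1-10} yields simultaneously $h\in L_1$ (hence $j(u)\in L_1$) and
\[
\int_{B_N}\big(j(u)-j(v)\big)\,dx \;\leq\; (u-v,\,w)_{H_{-1}\times H_1},
\]
which is precisely $\Psi(v)-\Psi(u)\geq (w,v-u)_{H_1\times H_{-1}}=(f,v-u)_{H_{-1}}$. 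No equality of $\int w(v-u)$ with the pairing is needed; one inequality suffices.
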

\begin{proof}[Proof]
To prove this statement we need to show that for any $f$ such that $f = Au$ for some $u\in Dom(A)$, it is also true $f\in \dd \Psi$. Recalling the definition of $\dd \Psi$:
\[\,\dd\Psi (u) =\big \{g\in H_{-1}\col\, \Psi (v)-\Psi (u)\geq (g,v-u)_{H^{-1}}, \ \forall v \in \cD(\Psi)\big\},\]
where $Dom(\Psi)=\{u\in H_{-1}\col \Psi(u)< +\infty\}$, we conclude
that it suffices to prove that for each $f= Au$ (i.e. such that there
exists $u \in H_{-1}\cap L_1(B_N)$ with $f =\DAN w$,
where $w\in H_1$ and $w(x) \in \varphi(u(x))$ for a.e $x \in B_N$) we have
$$ \int_{B_N}j(v)\,dx -\int_{B_N}j(u)\,dx \geq (f,v-u)_{H_{-1}}=(w,v-u)_{H_1\times H_{-1} }$$
for any $v\in Dom(\Psi)$. Above we have used that $\DAN$ acts as an isomorphism between $H_1$ and $H_{-1}$.

To prove Lemma \ref{L1-11} for $v\in Dom(\Psi)$ let us take $v \in  H^{-1}
\cap L_1(B_N)$ and $j(v) \in L_1(B_N)$. Since $w \in \varphi(u)$
and $\varphi = \partial j$, we have  $w \in \partial j(u)$. Therefore according to the definition of subdifferential (\ref{gradj}) we have $j(v)-j(u)
\geq w\cdot(v-u)$ a.e on $B_N$.

Since $j(r)/|r| \rightarrow \infty  $
as $ |r| \rightarrow \infty$, we can find constants $C_1\ C_2$, such
that $j(r)\geq C_1|r|+C_2$. Let $F=u-v$, $h=j(u)-j(v)$, $g=j(v)-C_1|
u|-C_2$. Then $ F \cdot w \geq  h \geq g $ a.e on $B_N$, and we can
apply Lemma \ref{L1-10}. We get $h \in L_1(B_N)$.  Also,
$(F,w)_{H_{-1} \times H_1} \geq \int\limits_{B_N}h\,dx $. From that,
we get inequality

$$ \int\limits_{B_N}j(u)\,dx -\int\limits_{B_N}j(v)\,dx \geq (v-u,w)_{H_{-1}\times H_1}=(w,v-u)_{H_1 \times H_{-1}},$$
it means that $\Psi(v)-\Psi(u)\geq(w,v-u)_{H_1 \times H_{-1}}= (f,v-u)_{H_{-1}}$,
so $f \in \partial\Psi(u)$.
\end{proof}
This finished the proof of Theorem \ref{L1-6}.
\medskip

\begin{lemma}\Label{L1-13}
$A$ is a maximal monotone operator.
\end{lemma}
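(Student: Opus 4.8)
The plan is to show that $A$ coincides with the subdifferential $\partial\Psi$, which is already known to be maximal monotone (since $\Psi$ is proper, convex and lower semicontinuous on the Hilbert space $H_{-1}$, by the preceding Corollary). Lemma~\ref{L1-11} gives the inclusion $A\subseteq\partial\Psi$, and a monotone operator contained in a maximal monotone operator is itself maximal monotone precisely when the two coincide; so everything reduces to establishing the reverse inclusion $\partial\Psi\subseteq A$. I would first record, as an immediate consequence of $A\subseteq\partial\Psi$, that $A$ is monotone.

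For the reverse inclusion I would take an arbitrary pair $(u,f)$ with $f\in\partial\Psi(u)$; in particular $f\in H_{-1}$ and $u$ lies in $\mathrm{Dom}(\Psi)\subseteq L_1(B_N)$. By Theorem~\ref{L1-6}, the function $w:=[\DAN]^{-1}(f)$ then satisfies $w(x)\in\varphi(u(x))$ for almost every $x\in B_N$. Because $\DAN$ acts as an isomorphism of $H_1$ onto $H_{-1}$ (Theorem~\ref{isom} together with \eqref{PN}), from $f\in H_{-1}$ we get $w\in H_1$, and of course $f=\DAN w$. Comparing with the definition \eqref{Anon} of $A$, this is exactly the statement that $u\in\mathrm{Dom}(A)$ and $f\in Au$. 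Hence $\partial\Psi\subseteq A$, so $A=\partial\Psi$ and $A$ is maximal monotone.

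I expect the only genuine content here to be already packaged in Theorem~\ref{L1-6}: its forward implication is what turns the $H_{-1}$-preimage $[\DAN]^{-1}f$ of an element $f\in\partial\Psi(u)$ into a bona fide measurable selection of $\varphi(u(\cdot))$. The point to be careful about is that no extra regularity on $u$ is needed: the membership $w\in H_1$ is automatic because $f\in H_{-1}$ and $\DAN$ maps $H_1$ onto $H_{-1}$, while the integrability of $u$ is forced by $\Psi(u)<\infty$. Once these observations are in place the identification of the graphs (and domains) of $A$ and $\partial\Psi$ is mechanical, and the maximal monotonicity of $A$ follows with no resolvent (Minty-type surjectivity) computation; the latter could be used as an alternative route, but it would require solving the stationary problem $u+\DAN w=f$, $w\in\varphi(u)$, directly, which is exactly what the identification $A=\partial\Psi$ lets us avoid.
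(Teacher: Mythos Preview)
Your argument is circular. You invoke the forward implication of Theorem~\ref{L1-6} (namely $f\in\partial\Psi(u)\Rightarrow[\DAN]^{-1}(f)(x)\in\varphi(u(x))$ a.e.) as if it were already available, but in the paper's logical structure that implication is \emph{not} proved prior to Lemma~\ref{L1-13}. What is available before Lemma~\ref{L1-13} is only Lemma~\ref{L1-11}, i.e.\ the inclusion $A\subset\partial\Psi$, which corresponds to the \emph{reverse} implication of Theorem~\ref{L1-6}. The forward implication is obtained only \emph{after} Lemma~\ref{L1-13}: once $A$ is shown to be maximal monotone, the inclusion $A\subset\partial\Psi$ forces $A=\partial\Psi$, and that equality yields the missing direction. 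The paper makes this dependency explicit in the sentence following the proof of Lemma~\ref{L1-13}: ``Therefore, proof of Lemma \ref{L1-13} and thus Theorem \ref{L1-6} is finished.'' (The earlier line ``This finished the proof of Theorem~\ref{L1-6}'' after Lemma~\ref{L1-11} is at best loose; only one direction is in hand at that point.) So the ``genuine content'' you correctly locate in Theorem~\ref{L1-6} is not packaged anywhere you can use it---it \emph{is} Lemma~\ref{L1-13}.

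The paper's actual proof is exactly the Minty-type surjectivity argument you hoped to avoid: given $f\in H_{-1}$ one must produce $u\in H_{-1}\cap L_1(B_N)$ and $w\in H_1$ with $u+\DAN w=f$ and $w(x)\in\varphi(u(x))$ a.e. Writing $\eta=\varphi^{-1}$, this becomes $\eta(w)+\DAN w=f$, which the paper solves by Yosida approximation $\eta_\mu$: Browder's theorem gives a solution $w_\mu\in H_1$ of $\eta_\mu(w_\mu)+\DAN w_\mu=f$ for each $\mu>0$; uniform $H_1$-bounds on $w_\mu$ and an $L_1$ energy estimate on $\eta_\mu(w_\mu)\cdot w_\mu$ are then combined with Dunford--Pettis compactness and a maximal-monotonicity argument in $L_1(E_\varepsilon)\times L_\infty(E_\varepsilon)$ (plus Egorov) to pass to the limit and recover $u\in\eta(w)$ a.e. This direct construction is where the $p$-adic Sobolev machinery (Theorems~\ref{prop-3-3} and~\ref{isom}, and the compact embedding $H^\alpha(B_N)\hookrightarrow L_r(B_N)$) is actually used, to upgrade weak $H_1$-convergence of the truncations to strong convergence. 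Your route, as written, sidesteps all of this only by assuming its conclusion.
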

\begin{proof}[Proof]
	Due to \eqref{Anon},
we need to proof that for a given $f \in H_{-1}$ there exists $u \in H_{-1}\cap L_1(B_N)$ and $w\in H_1$ such that
\begin{equation}\Label{eq1}\
u+\DAN w =f,
\end{equation}
and $w(x)=\vph(u(x))$ a.e. on $B_N$.
As $\vph$ is a strictly monotone increasing smooth con\-ti\-nu\-ous function we introduce $\eta := \varphi^{-1}$ and rewrite equation \eqref{eq1} as
\begin{equation} \Label{eq2}\
 \eta(w)+ \DAN w=f.
 \end{equation}
Let $\eta_{\mu}=\dfrac{1}{\mu}(1-J_\mu)$ be the Yosida approximation of the nonlinear map $\eta$, where its resolvent $J_\mu$ is given by $J_\mu = (1+\mu \eta)^{-1}$. Remark that for any $\mu \in \RR^+$ there is a solution $w_{\mu}\in H_1$ of equation
\begin{equation} \Label{eq3}\
\eta_{\mu}(w_\mu)+ \DAN w_\mu=f.
\end{equation}

Indeed, let us denote $\w\DAN = \DAN - \frac{\lambda_0}{2}$, where $\lambda_0$ is the minimal non-zero eigenvalue of operator $D_N^\alpha$ (see e.g. \cite[Chapter II  10 4]{VVZ}).
Since $\w{\DAN}$ is a maximal monotone operator,
 due to the Browder Theorem \cite[Th. 2]{Bro} we have that operator $\eta_{\mu}+\w{\DAN}$ is maximal monotone. Then from Minty Theorem \cite{Min} it follows that  the map $\eta_{\mu}+\w{\DAN}+\frac{\lambda  I}{2}$ is surjective. Therefore equation
\[
\eta_{\mu}(w_\mu)+ \w{\DAN} w_\mu+\frac{\lambda}{2}w_\mu=f
\]
has a unique solution for any right-hand side $f \in H_{-1}$ and this implies solvability of equation \eqref{eq3}.

The equation \eqref{eq3} may be rewritten in form
\begin{equation} \Label{eq4}\
	\DAN(w_\mu)=\mathfrak{f}_\mu,
\end{equation}
where $\mathfrak{f}_\mu = f- \eta_{\mu}(w_\mu)$.
Let us show that $w_\mu$ is uniformly bounded in $H_1$ with respect to $\mu \to 0$.

Firstly, operator $(\DAN)^{-1} : H_{-1} \to H_1$ is continuous. This follows from closed graph theorem. For this we need to check that operator $(\DAN)^{-1} : H_{-1} \to H_1$ has closed graph, i.e from convergence $\varphi_k \to \varphi$ in $H_{-1}$ and $(\DAN)^{-1}\varphi_k \to f $ in $H_1$ it follows that $(\DAN)^{-1}\varphi = f $. Indeed, from these conditions, due to continuity of embedding $H_{1} \subset H_{-1}$ (see Appendix) we have that $(\DAN)^{-1}\varphi_k \to f $ in $H_{-1} $. Since operator $(\DAN)^{-1} : H_{-1} \to H_{-1}$ is continuous (see Appendix), we have that $(\DAN)^{-1}\varphi_k \to (\DAN)^{-1}\varphi $ in $H_{-1}$, therefore $(\DAN)^{-1}\varphi =f$ as required to prove.

Secondly, $\mathfrak{f}_\mu$ is uniformly bounded in $H_{-1}$ as $\mu \to 0$.
Indeed, due to \eqref{eq4}
\begin{equation} \Label{eq5}\
	\Vert \mathfrak{f}_\mu \Vert_{H_{-1}} \leq \Vert f \Vert_{H_{-1}} + \Vert \eta_\mu(w_\mu) \Vert_{H_{-1}} \leq C + \Vert \eta_\mu(w_\mu) \Vert_{H_{-1}},
\end{equation}
and uniform boundness of $\eta_\mu(w_\mu)$ in $H_{-1}$ as $\mu \to 0$ follows from Theorem 2.1 \cite{BCP}.

 The continuity of operator $(\DAN)^{-1} : H_{-1} \to H_1$ and the uniform boundedness of $\mathfrak{f}_\mu$ in $H_{-1}$ as $\mu \to 0$ due to
$$
 w_\mu = (\DAN)^{-1}(\mathfrak{f}_\mu)
$$(see \eqref{eq4}), gives the uniform boundedness of $w_\mu$ in $H_1$ with respect to $\mu \to 0$.

Thus we can find a subsequence $\mu_n \to 0$ such that $w_{\mu_n} \to w$ in $H_1$ and in $L_2(B_N)$,  i.e.
\begin{align} \Label{eq8}\
 w_{\mu_n} \to w &\text{ a.e. on } B_N,\\  \Label{eq8-1}\
(I+\mu_n\eta)^{-1}w_{\mu_n} \to w &\text{ a.e. on } B_N.
\end{align}
Indeed, by \cite[Lemma 1.3.d]{BCP}
$$J_{\mu_n} (w):=(I+\mu_n\eta)^{-1}w \to w$$ as $\mu_n \to 0$ in $H_1$. Due to the inequality $\Vert J_\mu(x)-J_\mu(y)\Vert_{H_1}\leq \Vert x - y\Vert_{H_1}$  (see e.g. \cite[Prop. 2.3]{Barbu:2010}) we have that $J_{\mu_n}(w_{\mu_n})\to w$ in $H_1$ and therefore \eqref{eq8}.


Multiplying equation \eqref{eq3} by $w_{\mu_n}$, we see that for all $\mu_n > 0$
\begin{align}\Label{eq11}\,
\int\limits_{B_N}\eta_{\mu_n}(w_{\mu_n}) \cdot w_{\mu_n} \, dx
&\leq \int\limits_{B_N}\eta_{\mu_n}(w_{\mu_n}) \cdot w_{\mu_n} \, dx +
\int\limits_{B_N} D_N^\al w_{\mu_n}\cdot w_{\mu_n}\, dx \leq\\
& \leq\Vert f\Vert_{H_{-1}}\cdot\Vert w_{\mu_n}\Vert_{H_1}
\leq C,
\end{align}
 due to uniform boundedness of $w_{\mu_n}$ in $H_1$ with respect to $\mu_n \to 0$.

%
%
%
%

Let us define $g_{\mu_n}:= \eta_{\mu_n}(w_{\mu_n})\subset \eta (w_{\mu_n})$ (see e.g. \cite[Lemma 2.1]{BrPa:1970}). To finish the proof of Lemma \ref{L1-13} we need the following lemma.

\begin{lemma}\Label{Th18}\,
	Let $\eta$ be a maximal monotone graph in $\RR \times \RR$ such that $Dom\,(\eta)=\RR$ and $0 \in \eta(0)$, such that $\vph = \eta^{-1}$ may be represented as $\vph = \dd j$, $j(0)=0$, $Dom\, (j)=\RR.$
	
	Let $g_{n}$ and $w_{n}$ be measurable functions on $B_N$ such that $w_{n} \to w$ a.e. on $B_N$ as $n\to \infty$, $g_{n}\in \eta(w_{n})$ a.e. on $B_N$  and $g_{n}\cdot w_{n} \in L_1(B_N)$ with
	\begin{equation}\Label{eqTh18}\,
	\int\limits_{B_N}g_{n}\cdot w_{n}\, dx \leq C.
	\end{equation}
	Then, there is a subsequence $n_k \to \infty$ such that $g_{n_k} \to u$ weakly in $L_1(B_N)$.
\end{lemma}

\begin{proof}


Since $\varphi = \partial j$, $j(0)=0$ we may write
\begin{equation}\Label{1-20}\
\dd j(u)=\vph(u)=\{a\in \RR \col j(v)-j(u)\geq a\cdot (v-u), \forall v\in Dom\,(j)\}.
\end{equation}
By the definition of the inverse map
\[T^{-1}(x^*)=\{x\in X\col x^*\in T(x)\}\]
for nonlinear map $T\col X\to X^*$ acting from some Banach space $X$ into its dual $X^*$, from inclusion $g_n\in \eta(w_n)$ (see e.g. \cite[Lemma 2.1]{BrPa:1970})  we have that $w_n \in \eta^{-1}(g_n)=\vph(g_n)$. Taking in \eqref{1-20} $v=0$, $u=g_n(x)$, then $a= w_n\in \vph(g_n)$ therefore we have
\[-j(g_n(x))\geq w_n\cdot\big(-g_n(x)\big).\]

Thus from \eqref{eqTh18} we obtain that for all $n\geq 1$
$$\int_{B_N} j(g_{n}(x))\, dx \leq \int_{B_N}g_{n}\cdot w_{n}\, dx \leq C.$$

Applying the Vall\'ee Poussin reformulation of Dunford-Pettis Theorem (\cite[Th.1.3(d), Ch. VIII]{EkelTemam}) to the sequence $\{g_{\mu_n}\}$ we have its weak compactness in $L_1(B_N)$. Therefore, there is a subsequence $n_k \to \infty$ such that
\begin{equation} \Label{eq6}\
g_{\mu_{n_k}} \to \, u \ \text{weakly in} \ L_1(B_N)
\end{equation}
to some $u\in L_1(B_N)$, which finishes the proof of Lemma \ref{Th18}. \end{proof}

To finish the proof of Lemma \ref{L1-13} and  thus the statement of the existence of the solution in the main Theorem, it remains to prove that

\begin{equation}
\Label {eq6-1}\
u(x) \in \eta(w(x)) \  \text{a.e.  on} \ B_N,
\end{equation}
which implies that $w(x) \in \varphi(u(x))$ a.e. on $B_N$.

It suffices to prove that for every $M>0$, $u(x)\in \eta (w(x))$ a.e. for $x\in B_N^M \col = \{x\in B_N\col \vert w(x)\vert \leq M\}.$ From \eqref{eq8} and Egorov theorem,  it follows that for every $\varepsilon >0$ there exists a measurable subset $E_\varepsilon \subset B^M_N$ such that $mes\, (B^M_N \setminus E_\varepsilon) \leq \varepsilon$  and
\begin{equation} \Label{eq7}\
w_{\mu_n} \to w \text{ uniformly in } E_\varepsilon \text{ as } \mu_n \to 0,
\end{equation}
and $w\in L_\infty(B_N)$.

%
Note that operator
$$\tilde{\eta} = \{ [u,w] \in L_1(E_\varepsilon)\times L_{\infty}(E_\varepsilon) \colon u(x) \in \eta(w(x)) \text{ a.e. on } E_\varepsilon \}$$
is maximal monotone in $L_1(E_\varepsilon)\times L_{\infty}(E_\varepsilon)$.
Indeed, let $\tilde{w} \in L_{\infty}(E_\varepsilon)$ and $\tilde{u} \in L_{1}(E_\varepsilon)$ be such that
\begin{equation}\Label{eq9}\
\tilde{u}(x) \in \eta(\tilde{w}(x))\  \text{a.e. on}\ E_\varepsilon.
\end{equation}

Without loss of generality we may assume that $Dom(\eta)$ is bounded. If $Dom(\eta)$ is not bounded, consider $\tilde{\eta}=\eta+\partial I_B$, where $I_B$ is the indicator function of a ball centered at $0$ of large radius:
\[
I_B(x)=\left\{
\begin{array}{lc}
0&x\in B;\\
+\infty& x\notin B.
\end{array}
\right.
\]

By the monotonicity of $\eta$, since $g_{n_k}\in \eta(w_{n_k})$, we have
 $$(\tilde{u}-g_{\mu_{n_k}},\tilde{w}-w_{\mu_{n_k}})\geq 0$$ a.e. on $E_\varepsilon$, thus

  $$\int_{E_\varepsilon}(\tilde{u}-g_{\mu_{n_k}},\tilde{w}-w_{\mu_{n_k}})\, dx \geq 0.$$
Consequently $\int\limits_{E_\varepsilon}(\tilde{u}-u,\tilde{w}-w)\, dx \geq 0$.
Let $\tilde{w} = (I+\eta)^{-1}(w+u)$. Remark that $\tilde{w} \in L_{\infty}(E_\varepsilon)$ since $Dom(\eta)$ is bounded. We have $\tilde{w}+\eta\tilde{w} \in w+u$ a.e. on $E_\varepsilon$.
Choosing

\begin{equation}\Label{eq10}\
\tilde{u}=w+u-\tilde{w}
\end{equation}
we get $\int\limits_{E_\varepsilon}|w-\tilde{w}|^2\,dx \leq 0$ so that $\tilde{w}=w$ a.e.
Therefore from \eqref{eq10} we have $\tilde{u} = u$.
Substituting both of this equalities  in \eqref{eq9} we have $u(x) \in \eta(w(x))$ a.e. on $E_\varepsilon$.
Therefore $w(x) \in \varphi(u(x))$ a.e. on $E_\varepsilon$. Since $\varepsilon$ is arbitrary, we conclude that $w(x) \in \varphi(u(x))$ a.e. on $B_N$.


Therefore, proof of Lemma \ref{L1-13} and thus Theorem \ref{L1-6} is finished. \end{proof}

From maximal monotonicity of operator $\partial \Psi (u)$ in $H_{-1}$ and it's characterization in terms of $\DAN$, we obtain that statement of Theorem \ref{main} holds.

\bigskip
\begin{rem}\rm
The above results remain valid, if we consider, instead of $\Qp$, an arbitrary non-Archimedean local field. In turn, this makes it easy to develop multi-dimensional generalizations of our results.
It is known \cite{Ko:2021,Ko:2023} that the transition to some multi-dimensional operators is equivalent to considering the whole setting over a wider field, an unramified
extension of the initial one.
\end{rem}

 \section {Acknowledgements}
The first author acknowledges the funding support in the framework of the project ``Spectral Optimization: From Mathematics to Physics and Advanced Technology'' (SOMPATY) received from the European Union’s Horizon 2020 research and innovation programme under the Marie Skłodowska-Curie grant agreement No 873071.


\begin{thebibliography}{999}

\bibitem{AKK:book} \Label{AKK:book}
Albeverio S., Khrennikov  A. Yu., Shelkovich V. M. {\it Theory of p-Adic Distributions. Linear and Nonlinear Models}, Cambridge University Press, 2010.

\bibitem{AKK}\Label{AKK}
Antoniouk A., Khrennikov A., Kochubei A. N. Multidimensional nonlinear pseudo-differential evolution equation with p-adic spatial variables, {\it J. Pseudo-Differ. Oper. Appl.} (2020), {\bf 11}, p. 311–343.

\bibitem{Aron:1955}\Label{Aron:1955}\
Aronszajn N. Boundary values of functions with finite Dirichlet integral, {\it Tech. Report, Univ. of Kansas} (1955), {\bf 14}, p. 77 - 94.

\bibitem{Barbu:2010}\Label{Barbu:2010}
Barbu V. {\it Nonlinear Differential Equations of Monotone Types in Banach Spaces}, Springer Science+Business Media, LLC, 2010.

\bibitem{BGPW:2014}\Label{BGPW:2014}
Bendikov A., Grigor'yan A., Pittet,  Ch., Woess W. Isotropic Markov semigroups on ultra-metric spaces {\it Russian Math. Surveys} (2014), {\bf 69}, p. 589-680.

\bibitem{BER}\Label{BER}
Berezansky Y., Us G., Sheftel Z. {\it Functional analysis}, Naukova Dumka, Kyiv, 1990.

\bibitem{BZ:2019}\Label{BZ:2019}
Bikulov A. Kh., Zubarev A. P. Complete Systems of Eigenfunctions of the Vladimirov Operator in $L_2(B_r)$ and $L_2(\Qp)$, {\it Journal of Mathematical Sciences} (2019), {\bf 237},  p. 362 - 374.

\bibitem{Bonforte:2015}\Label{Bonforte:2015}
Bonforte M., Sire Y., V\'azquez, J. L. Existence, uniqueness and asymptotic behaviour for fractional porous medium equations on bounded domains, {\it Discrete Contin.
Dyn. Syst.} (2015), {\bf  35}, no. 12, 5725–5767.

\bibitem{Bourbaki:2004}\Label{Bourbaki:2004}\
Bourbaki N. {\it Elements of Mathematics. Integration II}, Springer, Berlin, 2004.

\bibitem{Brezis:1971}\Label{Brezis:1971}
Brezis H.  Monotonicity methods in Hilbert spaces and some applications to nonlinear partial differential equations, {\it Contributions to Nonlinear Functional Analysis} (Proc. Sympos., Math. Res. Center, Univ. Wisconsin, Madison, Wis., 1971), Acad. Press (1971), p. 101-156.

\bibitem{Brezis:2011}\Label{Brezis:2011}
Brezis H. {\it Functional Analysis, Sobolev Spaces and Partial Differential Equations}, Springer Science+Business Media, LLC 2011: New York Dordrecht Heidelberg London, xiii+599 p.

\bibitem{BCP}\Label{BCP}\
Brezis H., Crandall, M., Pazy, A. Perturbations of nonlinear maximal monotone sets in Banach spaces, {\it Comm. Pure Appl. Math.} (1970), {\bf  23}, p. 123-144.

\bibitem{BrPa:1970}\Label{BrPa:1970}
Brezis H., Pazy, A. Accretive sets and differential equations in Banach spaces, {\it Israel J. Math.} (1970), {\bf 8}, 367–383. https://doi.org/10.1007/BF02798683

\bibitem{Bro}\Label{Bro}
Browder F. Nonlinear Maximal Monotone Operators in Banach Spaces, {\it Math. Annalen} (1968), {\bf 175}, p. 89-113.

\bibitem{Bruhat:1961}\Label{Bruhat:1961}\
Bruhat F. Distributions sur un groupe localement compact et applications \`a l'\'etude des repr\'esentations des groupes $p$-adiques, {\it Bull. Soc. Math. France} (1961), {\bf  89}, p. 43-75.

\bibitem{CrandallPasy:1969}\Label{CrandallPasy:1969}\
Crandall M.G., Pazy A. Semi-Groups of nonlinear contractions and dissipative sets, {\it J. Funct. Anal.} (1969), {\bf 3}, p. 376-418.

\bibitem{Dun}\Label{Dun}
Dunford N., Schwartz J. T. {\it Linear Operators. Part I: General Theory}, John Wiley and Sons Inc, 1958.

\bibitem{EkelTemam}\Label{EkelTemam}
Ekeland I., Temam R. {\it Convex Analysis and Variational Problems}, Studies in Mathematics and its Applications, vol. 1, North-Holland Publishing, Amsterdam, 1976.

\bibitem{Gagl:1958}\Label{Gagl:1958}\
Gagliardo E. Propriet{\`a} di alcune classi di funzioni in pi\`u variabili, {\it Ricerche Mat.} (1958), {\bf 7},  p. 102 - 137.

\bibitem{Gel}\Label{Gel}\
Gelfand I. M., Graev M. I., Piatetskii-Shapiro I. I.{\it  Generalized functions. Representation theory and automorphic functions}, Academic Press, Boston, 1990.

\bibitem{Gel-Shilov}\Label{Gel-Shilov}\
Gel'fand I.M., Shilov G.E. {\it Generalized functions. Properties and Operations}, Academic Press: New York, London, 1964.

\bibitem{GibHaw:1993}\Label{GibHaw:1993}\
Gibbons G.W., Hawking S.W. (eds.), {\it Euclidean Quantum Gravity}, World Scientific Publishing, Singapore, 1993

\bibitem{GorkaKostrRey:2014}\Label{GorkaKostrRey:2014}
G\'orka P., Kostrzewa T., Reyes E.G. Sobolev Spaces on Locally Compact Abelian Groups: Compact Embeddings and Local Spaces, {\it Journal of Function Spaces} (2014), article ID 404738 http://dx.doi.org/10.1155/2014/404738

\bibitem{GorkaKostr:2015}\Label{GorkaKostr:2015}
 Górka P.; Kostrzewa T.  Sobolev spaces on metrizable groups, {\it Ann. Acad. Sci. Fenn. Math.} (2015), {\bf 40 }, no. 2, 837–849.

\bibitem{GorkaKost:2020}\Label{GorkaKost:2020}\
G\'orka P., Kostrzewa T. A second look of Sobolev spaces on metrizable groups, {\it Annales Academi\ae \ Scientiarum Fennic\ae, Mathematica} (2020), {\bf 45}, p. 95-120.


\bibitem{Helem:2006}\Label{Helem:2006}\
Helemskii A.Ya. {\it Lectures and Exercises on Functional Analysis}, AMS, Providence, 2006.

\bibitem{Hewitt-Ross:1979:II}\Label{Hewitt-Ross:1979:II}
Hewitt E., Ross K. {\it Abstract Harmonic Analysis}, vol. II, Springer, Berlin, 1979.

\bibitem{Kato:1970}\Label{Kato:1970}
Kato T. Accretive operators and nonlinera evolution equations in Banach spaces, {\it Nonlinear Functional Analysis. Proc. Symp. in Pure Math.}, AMS (1970) {\bf 18}, Part 1, p. 138-161.

\bibitem{Ko:2018}\Label{Ko:2018}
Khrennikov A., Kochubei A. N. $p$-Adic Analogue of the Porous Medium Equation, {\it J. Fourier Anal. Appl.} (2018) {\bf 24}, p. 1401-1424.

\bibitem{KOJ}\Label{KOJ}
Khrennikov A., Oleschko, K., de Jesús Correa L\'opez, M.,
Modeling fluid's dynamics with master equations in ultrametric spaces representing the treelike structure of capillary networks {\it
Entropy} (2016) {\bf 18}, no. 7, Paper No. 249, 28 pp.

\bibitem{Koblitz:1984}\Label{Koblitz:1984}
Koblitz N., $p$-adic numbers, $p$-adic analysis, and Zeta function, Springer-Verlag, New York, Heidelberg, Berlin, 1977.


\bibitem{Ko:2001}\Label{Ko:2001}\
Kochubei A.N. {\it Pseudo-Differential Equations and Stochastics over Non-Archimedean Fields} Marcel Dekker, New York, 2001.

\bibitem{Kochubei:2018:Ball}\Label{Kochubei:2018:Ball}
Kochubei A. N. Linear and nonlinear heat equations on a $p$-adic ball {\it Ukrainian Math. J.} (2018), {\bf 70}, No. 2 , p. 217-231.

\bibitem{Ko:2021}\Label{Ko:2021}\
Kochubei A. N. $L^p$ properties of non-Archimedean fractional differentiation operators {\it J. Pseudo-Differ. Oper. Appl.} (2021),{\bf 12}, no. 4, Paper No. 56, 14 pp.

\bibitem{Ko:2023}\Label{Ko:2023}\
Kochubei A. N.The Vladimirov-Taibleson operator: Inequalities, Dirichlet problem, boundary H\"older regularity, {\it J. Pseudo-Differ. Oper. Appl.} (2023) {\bf 14}, no. 2, Article No. 31, 27 pp. https://doi.org/10.1007/s11868-023-00525-7

\bibitem{Komura:1967}\Label{Komura:1967}\
Komura Y. Nonlinear semigroups om Hilbert spaces, {\it J. Math. Soc. Japan}, {\bf 19} (1967), p. 493-507.

\bibitem{Koz:2007}\Label{Koz:2007}
Kozyrev, S. V. Wavelets and spectral analysis of ultrametric pseudodifferential operators. {\it Sb. Math.}  (2007), {\bf 198}, no. 1-2, p. 97-116.

\bibitem{Morris:1977}\Label{Morris:1977}
Morris S. {\it Pontryagin Duality and the Structure of Locally Compact Abelian Groups}, Cambridge University Press, 1977.

\bibitem{Min}\Label{Min}\
Minty G. Monotone (nonlinear) operators in Hilbert space, {\it Duke Math. J.} (1962),  {\bf  29 }, p.341-346.

\bibitem{Not:1993}\Label{Not:1993}\
Nottale L., {\it Fractal Spacetime and Microphysics: Towards a Theory of Scale Relativity}, World Scientific Publishing, Singapore, 1993.

\bibitem{RVZG:2010}\Label{RVZG:2010}
 Rodríguez-Vega, J. J.; Zúñiga-Galindo, W. A. Elliptic pseudodifferential equations and Sobolev spaces over p-adic fields, {\it Pacific J. Math. 246}, (2010), no. 2, 407–420.


\bibitem{Slob:1958}\Label{Slob:1958}\
Slobodetskii I.N. Generalized Sobolev spaces and their application to boundary problems for partial differential equations, {\it Leningrad Gos. Ped. Inst. Uchen. Zap} (1958), {\bf 197}, p. 54 - 112.


\bibitem{Taib}\Label{Taib}
Taibleson M.H. {\it Fourier Analysis on Local Fields}, Princeton University Press, 1975.

\bibitem{Vladimirov:tables}\Label{Vladimirov:tables}\
Vladimirov V. Tables of integrals of complex-valued functions of $p$-adic arguments, {\it Steklov Mathematical Institute}, 1997.

\bibitem{VVZ}\Label{VVZ}
Vladimirov V., Volovich I., Zelenov E. {\it $p$-Adic Analysis and Mathematical Physics}, World Scientific, Singapore, 1994.

\bibitem{Vazquez:2007}\Label{Vazquez:2007}
V\'{a}zquez J. L. {\it The Porous Medium Equation. Mathematical Theory}, Oxford University Press, 2007.

\bibitem{Volovich:2010}\Label{Volovich:2010}\
Volovich I.V., Number theory as the ultimate physical theory, $p$-Adic Numbers, {\it Ultrametric Analysis and Applications} (2010), {\bf 2}, No. 1, p. 77–87. DOI: 10.1134/S2070046610010061 (preprint CERN-TH 87 (1987), 4781-4786.)


\bibitem{WheelerFord:1998}\Label{WheelerFord:1998}\
Wheeler J.A., Ford K.W. {\it Geons, Black Holes, and Quantum Foam: A Life in Physics}, Norton, W.W., New York, 1998.

\bibitem{ZG}\Label{ZG}
Z\`u\~{n}iga-Galindo W. A. {\it  Pseudodifferential Equations over Non-Archimedean Spaces}, Lect. Notes Math. Vol. 2174 (2016), XVI+175 p









\end{thebibliography}
\end{document}